\DeclareMathAlphabet{\mathpzc}{OT1}{pzc}{m}{it} 
\newcommand{\B}{\mathbb{B}}
\newcommand{\D}{\mathcal{D}}
\newcommand{\DD}{\mathbb{D}}
\newcommand{\M}{\mathcal{M}}
\newcommand{\N}{\mathbb{N}}
\newcommand{\R}{\mathbb{R}}
\newtheorem{ThA}{Theorem}
\newtheorem{Th}{Theorem}[section]              
\newtheorem{Rem}[Th]{Remark}
\newtheorem{Lem}[Th]{Lemma}
\title[UMD Banach spaces and the maximal regularity]
      {UMD Banach spaces and the maximal regularity for the square root of several operators}
\author[V. Almeida]{Víctor Almeida}
\author[J.J. Betancor]{Jorge J. Betancor}
\author[A.J. Castro]{Alejandro J. Castro}
\address{\newline
        Víctor Almeida, Jorge J. Betancor, Alejandro J. Castro \newline
        Departamento de An\'alisis Matem\'atico,
        Universidad de La Laguna, \newline
        Campus de Anchieta, Avda. Astrof\'{\i}sico Francisco S\'anchez, s/n, \newline
        38271, La Laguna (Sta. Cruz de Tenerife), Spain}
\email{valmeida@ull.es, jbetanco@ull.es, ajcastro@ull.es}
\keywords{UMD spaces, maximal regularity, Hermite, Bessel and Laguerre operators}
\subjclass[2010]{47D06, 35B65, 42B25}
\thanks{The second and the third authors are partially supported by MTM2010/17974.
The third author is also supported by a FPU grant from the Government of Spain.}
\begin{document}

\footnotetext{Date: \today.}

\maketitle                                  

\begin{abstract}
    In this paper we prove that the maximal $L^p$-regularity property on the interval $(0,T)$, $T>0$,
    for Cauchy pro\-blems associated with the square root of Hermite, Bessel or Laguerre type operators on $L^2(\Omega , d\mu; X),$
    characterizes the UMD property for the Banach space $X$.
\end{abstract}

\section{Introduction}\label{sec:intro}

Suppose that $\B$ is a complex Banach space, $A$ is a (possible unbounded) operator on $\B$ such that $D(A)$ is dense in $\B$.
We consider the Cauchy problem
\begin{equation}\label{eq INTRO1}
    \left\{\begin{array}{ll}
    u'(t)+Au(t)=f(t), & 0 \leq t < T, \\
    u(0)=0, &
    \end{array}\right.
\end{equation}
where $f:(0,T)\rightarrow \B$ is measurable and $0 < T \leq \infty$.

Let $1<p<\infty$ and $0 < T \leq \infty$. We say that the operator $A$ has the maximal $L^p$-regularity property on the interval $(0,T)$
when for every $f\in L^p((0,T),\B)$ there exists a unique solution $u\in L^p((0,T),\B)$ of \eqref{eq INTRO1}, and
\begin{equation*}\label{eq INTRO2}
\|u'\|_{L^p((0,T),\B)}+\|Au\|_{L^p((0,T),\B)}\leq C\|f\|_{L^p((0,T),\B)},
\end{equation*}
for a certain $C>0$ which does not depend on $f$. This property of maximal regularity is important, for instance, in nonlinear problems
and a lot of authors have studied it in the last years (see \cite{Am}, \cite{AB1}, \cite{AB2}, \cite{AB3}, \cite{AA}, \cite{AMP}, \cite{KL}, \cite{Le} and \cite{W}, amongst others).

Assume that $-A$ generates a $C_0$-semigroup $\{T_t\}_{t>0}$ on $\B$. Then the unique mild solution of \eqref{eq INTRO1} is defined by
\begin{equation*}\label{eq INTRO3}
    u(t)=\int_0^t{T_{t-s}(f(s))ds}, \quad t\in (0,T),
\end{equation*}
for every $f\in L^p((0,T),\B)$.

In order to prove the maximal $L^p$-regularity for the operator $A$, we consider the operator $K(\{T_u\}_{u>0})$ defined by
\begin{equation}\label{D11}
    K(\{T_u\}_{u>0})f(t)=\int_0^t{\partial_tT_{t-s}(f(s))ds}, \quad t>0.
\end{equation}
Since $\|\partial_t T_t\|_{L(\B)} \leq C/t$, $t>0$, where $L(\B)$ denotes the space of bounded operators from $\B$ into itself, the
convergence of the integral in \eqref{D11} depends on the properties of $f$. Suppose that $f \in C^1_c((0,T),D(A))$, that is,
$f:(0,T) \longrightarrow D(A)$ is a $C^1$-function with compact support. Then, we have that
\begin{align*}
    \int_0^t \left\| \partial_t T_{t-s}(f(s)) \right\|_\B ds
        \leq & C \left( \int_0^t \frac{\left\| f(s)-f(t) \right\|_\B}{t-s} ds + t \|Af(t)\|_\B \right) \\
        \leq & C t \left( \|f'\|_{L^\infty((0,T),\B)} + \|Af(t)\|_\B \right), \quad t>0.
\end{align*}
The subspace $C_c^1(0,T)\otimes D(A)$ is dense in $L^p((0,T),\B)$, $1<p<\infty$, because $D(A)$ is dense in $\B$.
The operator $A$ has the maximal $L^p$-regularity property if, and only if, the operator
$K(\{T_u\}_{u>0})$ can be extended from $C_c^1(0,T)\otimes D(A)$ to  $L^q((0,T),\B)$ as a bounded operator from $L^q((0,T),\B)$
into itself, for some (equivalently, for every) $1<q<\infty$. For the sake of simplicity, when the operator $K(\{T_u\}_{u>0})$ satisfies this
property, we say that it is bounded from $L^q((0,T),\B)$ into itself.

The Banach spaces with the UMD property play an important role in the study of maximal $L^p$-regularity for operators.
It is well known that the Hilbert transform $\mathcal{H}$ defined, for every $f\in L^p(\R)$, $1<p<\infty$,
$$\mathcal{H}(f)(x)
    =\lim_{\varepsilon \to 0^+} \frac{1}{\pi} \int_{|x-y|>\varepsilon}{{{f(y)}\over{x-y}}dy}, \quad a.e. \ x\in\R,$$
is bounded from $L^p(\R)$ into itself, $1<p<\infty$. If $X$ is a Banach space, $\mathcal{H}$ is extended to $L^p(\R)\otimes X$, $1<p<\infty$,
in a natural way. We say that $X$ is UMD when the Hilbert transform can be extended from $L^p(\R)\otimes X$ to $L^p(\R,X)$ as a bounded operator from
$L^p(\R,X)$ into itself for some (equivalently, for any) $1<p<\infty$. The main properties of UMD Banach spaces can be encountered in \cite{Bou}, \cite{Bu3}
and \cite{Rub}.

It is well-known that the operator $-\sqrt{\Delta}$ generates a semigroup, so called classical Poisson semigroup, being $\Delta=-d^2/dx^2$.
For every $t>0$, we represent, by $P_t(f)$ the classical Poisson integral of
$f\in L^p(\R),$ $1\leq p < \infty$, that is,
$$P_t(f)(x)=\int_{\R}{P_t(x-y)f(y)dy}, \quad x\in\R,$$
where
$$P_t(x)={1\over{\pi}}{t\over{t^2+x^2}}, \quad  x\in\R.$$
As it is also well-known, if $S$ is a positive bounded operator from $L^p(\Omega , d\mu)$ into itself, where $1\leq p<\infty$ and $(\Omega , d\mu)$
is a measure space, the operator $S\otimes I_X$ can be extended, for every Banach space $X$, to the Lebesgue-Bochner space
$L^p(\Omega , d\mu; X)$ as a bounded operator, that we continue denoting by $S$, from $L^p(\Omega , d\mu; X)$ into itself.
Thus, $P_t$ is a positive operator and $P_t \otimes I_X$ can be extended to $L^2(\R,X)$, for every $t>0$.

As usual, we adopt the same nomenclature when we are dealing with the square root of our operators, that is, we call Poisson semigroup to the
ones generated by the square root of them.

Brezis (see \cite{CL}) posed the following question: given a Banach space $\B$, is it true that every negative generator $-A$ of a bounded analytic
semigroup on $\B$ has maximal $L^p$ regularity? A partial answer was given by Coulhon and Lamberton \cite[p. 160]{CL} who established the following result.

\begin{ThA}\label{ThA}
    Let $X$ be a Banach space. Then, the operator $\sqrt{\Delta}$ acting on $\B=L^2(\R,X)$, has maximal $L^p$-regularity on the interval $(0,T)$, for any $T>0$,
    if, and only if, $X$ is UMD.
\end{ThA}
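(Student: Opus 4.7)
The strategy is to identify $K$ as a two-dimensional Fourier multiplier on $\R_t\times\R_x$ whose symbol decomposes into products of Hilbert and Riesz transforms, and then to use vector-valued singular integral theory together with a de~Leeuw-type restriction. Since maximal $L^p$-regularity is equivalent to boundedness of $K$ on $L^q((0,T),L^2(\R,X))$ for some (equivalently every) $1<q<\infty$, we work with $q=2$. Extending $f$ by zero to $\R^2$, the operator becomes a genuine convolution in $(t,x)$ against the causal kernel $\partial_tP_r(z)\chi_{\{r>0\}}$. Using $\widehat{P_r}(\xi)=e^{-r|\xi|}$, one computes the 2D Fourier multiplier
\begin{equation*}
m(\tau,\xi)=-\frac{|\xi|}{|\xi|+i\tau}=\frac{-\xi^2}{\tau^2+\xi^2}+i\,\frac{|\xi|\tau}{\tau^2+\xi^2},
\end{equation*}
which coincides with the symbol of $R_2^2+H_xR_1R_2$, where $R_1,R_2$ are the 2D Riesz transforms dual to $t,x$ and $H_x$ is the 1D Hilbert transform in $x$.

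\emph{Sufficiency.} If $X$ is UMD, then $L^2(\R,X)$ is also UMD, and $R_1$, $R_2$, $H_x$ are bounded on $L^q(\R^2,X)$ for every $1<q<\infty$ by the vector-valued Mikhlin multiplier theorem (equivalently, by the characterization of UMD through the Hilbert and Riesz transforms). Hence $K=R_2^2+H_xR_1R_2$ is bounded on $L^2(\R^2,X)$, and restricting to functions supported in $(0,T)\times\R$ yields the maximal $L^2$-regularity.

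\emph{Necessity.} Assume $K$ is bounded on $L^2((0,T),L^2(\R,X))$. The scaling identity $K[f(\lambda\cdot,\lambda\cdot)](t,x)=Kf(\lambda t,\lambda x)$ makes the operator norm independent of $T$; combined with the causality of the kernel and translation invariance in $t$, this produces boundedness of $K$ on $L^2(\R,L^2(\R,X))=L^2(\R^2,X)$. Thus $m$ is a bounded Fourier multiplier there, continuous away from the origin. By the vector-valued de~Leeuw restriction theorem, for any $c\neq 0$ the restriction of $m$ to the line $\tau=c\xi$,
\begin{equation*}
m(c\xi,\xi)=-\frac{1-ic\operatorname{sgn}(\xi)}{1+c^2},
\end{equation*}
is a bounded Fourier multiplier on $L^2(\R,X)$. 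Since this symbol is a constant plus a nonzero multiple of $\operatorname{sgn}(\xi)$---the Fourier symbol of $iH$, with $H$ the one-dimensional Hilbert transform---we conclude that $H$ is bounded on $L^2(\R,X)$, i.e.\ $X$ is UMD.

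\emph{Main obstacle.} The delicate step is the necessity: passing from boundedness of $K$ on the finite-interval mixed-norm space $L^2((0,T),L^2(\R,X))$ to a genuine 2D bound on $L^2(\R^2,X)$. This rests on scale invariance of the Poisson semigroup, the causal structure of the convolution kernel $\partial_tP_r(z)\chi_{\{r>0\}}$, and a Calder\'on--Zygmund-type extension promoting finite-$T$ boundedness to bounds on all of $\R_t$. Once on $\R^2$, the de~Leeuw restriction to a non-horizontal line through the origin immediately isolates the Hilbert transform.
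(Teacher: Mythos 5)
The paper does not prove Theorem A at all: it is quoted verbatim from Coulhon--Lamberton \cite[p.~160]{CL}, and everything the authors actually prove (Sections 2--4) consists of reducing the Bessel, Hermite and Laguerre Poisson semigroups to the classical one so that Theorem A can be invoked as a black box. So your proposal cannot be compared with "the paper's proof"; it has to be judged on its own, as a reconstruction of the Coulhon--Lamberton argument. As such it is essentially correct and is in fact the natural Fourier-multiplier form of their proof: the symbol computation $m(\tau,\xi)=-|\xi|/(|\xi|+i\tau)$ is right, the decomposition into $R_2^2+H_xR_1R_2$ is right (this is the multiplier-side version of the observation that $\partial_tP_t(x)$ is essentially $\mathrm{Re}\,(t+ix)^{-2}$, i.e.\ a causal piece of the Beurling transform), and the restriction of $m$ to a line $\tau=c\xi$ does produce $-\tfrac{1}{1+c^2}\bigl(1-ic\,\mathrm{sgn}(\xi)\bigr)$, isolating the Hilbert transform. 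Two points deserve more care in a full writeup. First, the vector-valued de~Leeuw restriction theorem: unlike the scalar $L^2$ case, a bounded symbol on $L^2(\R^2,X)$ is not automatically a multiplier, so you must check that the restriction-to-subspaces argument (regularize, tensor with Gaussians, pass to the limit) carries over to Bochner spaces for an arbitrary Banach space $X$; it does, but this is the load-bearing step of the necessity direction and should be stated and justified, not just named. Second, the kernel $\partial_rP_r(z)\chi_{\{r>0\}}$ is not locally integrable near the origin, so "the operator becomes a genuine convolution" needs the same dense-subspace interpretation the paper sets up on $C^1_c((0,T),D(A))$ in the introduction. Your scaling-plus-causality argument promoting boundedness from $(0,T)$ to all of $\R_t$ is correct and, pleasantly, mirrors the dilation trick ($f_R$, $f^R$) the authors use in Lemma 3.3 to pass between $K(\{P_u\}_{u>0})$ and its local part.
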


Our objective in this paper is to establish analogous results  to Theorem~\ref{ThA} when the Laplacian is replaced by
Bessel, Hermite or Laguerre type operators which are shown in the following chart, using the same notation as in \cite{StiZh},
after making the corresponding translation of the parameter $\alpha>0$.\\

\begin{center}
   \begin{tabular}{| c | c | c | c | }
     \cline{2-4} \multicolumn{1}{c|}{}
     & $\mathcal{L}$ & $\Omega$ & $d \mu$ \\
     \hline
     \multirow{1}{3mm}{\begin{sideways}\parbox{30mm}{\centering Bessel type \quad}\end{sideways}} & & & \\
     & $\displaystyle S_\alpha = - \frac{d^2}{dx^2} + \frac{\alpha(\alpha-1)}{x^2} $ & & $dx$\\
     & & $(0,\infty)$ & \\
     & $\displaystyle \Delta_\alpha = - \frac{d^2}{dx^2} - \frac{2 \alpha}{x} \frac{d}{dx}$ & & $x^{2\alpha}dx$ \\
     & & & \\
     \hline
     \multirow{1}{3mm}{\begin{sideways}\parbox{30mm}{\centering Hermite type \qquad }\end{sideways}}  & & & \\
     & $\displaystyle H = - \frac{1}{2} \left( \frac{d^2}{dx^2} - x^2 \right) $ & & $dx$ \\
     & & $\R$ & \\
     & $\displaystyle \mathcal{O} + \frac{1}{2}= - \frac{1}{2} \left( \frac{d^2}{dx^2} - 2 x \frac{d}{dx} \right) + \frac{1}{2} $ & & $\displaystyle \pi^{-1/2}e^{-x^2} dx$\\
     & & & \\
     \hline
     & & & \\
     & $\displaystyle L_\alpha^\varphi = - \frac{1}{2} \left( \frac{d^2}{dx^2} - x^2 - \frac{\alpha(\alpha-1)}{x^2}\right) $ & & $dx$\\
     & & & \\
     \multirow{1}{3mm}{\begin{sideways}\parbox{30mm}{\centering Laguerre type}\end{sideways}}  & $\displaystyle L_\alpha + \frac{\alpha + 1/2 }{2} = - \frac{1}{2}\left(x \frac{d^2}{dx^2} + (\alpha + 1/2 - x) \frac{d}{dx}\right) + \frac{\alpha + 1/2 }{2}$ & & $x^{\alpha-1/2}e^{-x}dx$\\
     & & & \\
     & $\displaystyle L_\alpha^\ell = - \frac{1}{2} \left( x \frac{d^2}{dx^2}+ (\alpha + 1/2) \frac{d}{dx}  - \frac{x}{4} \right) $ & $(0,\infty)$ & $x^{\alpha -1/2}dx$\\
     & & & \\
     & $\displaystyle L_\alpha^\psi = - \frac{1}{2} \left( \frac{d^2}{dx^2} - x^2 + \frac{2\alpha}{x} \frac{d}{dx} \right)$ & & $x^{2\alpha}dx$ \\
     & & & \\
     & $\displaystyle L_\alpha^\mathpzc{L} = - \frac{1}{2} \left( x \frac{d^2}{dx^2} + \frac{d}{dx}  - \frac{x}{4} - \frac{(\alpha - 1/2)^2}{4x} \right) $ & & $dx$ \\
     & & & \\
     \hline
   \end{tabular}
\end{center}

 \quad \\

The main result of this paper is the following.

\begin{Th}\label{Th1}
    Let $X$ be a Banach space and $\mathcal{L}$ any of the operators listed above. The following assertions are equivalent.
    \begin{enumerate}
        \item[$(a)$] $X$ is UMD.
        \item[$(b)$] The operator $\sqrt{\mathcal{L}}$ acting on $L^2(\Omega,d\mu;X)$ has maximal $L^p$-regularity on the interval $(0,T)$, for any $T>0$.
    \end{enumerate}
\end{Th}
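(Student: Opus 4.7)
The plan is to reduce both directions of the equivalence to Theorem~\ref{ThA} via a careful comparison between the Poisson semigroup $\{e^{-t\sqrt{\mathcal{L}}}\}_{t>0}$ and the classical Poisson semigroup $\{P_t\}_{t>0}$. The starting point is a precise description of the integral kernel $P_t^\mathcal{L}(x,y)$ and of its time derivative, which one obtains by subordinating the heat semigroup $\{e^{-u\mathcal{L}}\}_{u>0}$ through
\[
    e^{-t\sqrt{\mathcal{L}}} = \frac{t}{2\sqrt{\pi}}\int_0^\infty \frac{e^{-t^2/(4u)}}{u^{3/2}}\, e^{-u\mathcal{L}}\, du, \quad t>0,
\]
and invoking the classical Mehler, Poisson-Bessel and Laguerre heat-kernel formulas. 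Since several of the operators listed in the table are unitarily equivalent under multiplication by explicit weights (for instance $S_\alpha$ and $\Delta_\alpha$, and the various realizations of the Laguerre operator), it suffices to perform the analysis for one canonical representative in each block.

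For the implication $(a)\Rightarrow(b)$ I would write
\[
    K(\{e^{-u\sqrt{\mathcal{L}}}\}_{u>0}) = \bigl( K(\{e^{-u\sqrt{\mathcal{L}}}\}_{u>0}) - K(\{P_u\}_{u>0}) \bigr) + K(\{P_u\}_{u>0}),
\]
where $K(\{P_u\}_{u>0})$ denotes the classical Poisson maximal regularity operator, acting on $L^p((0,T),L^2(\R,X))$. By Theorem~\ref{ThA}, the second summand is bounded as soon as $X$ is UMD. For the difference, a local/global splitting of the spatial variable shows that when $(x,y)$ is far from the diagonal or close to a singular set the kernel enjoys Gaussian-type decay, so this piece is handled directly by scalar $L^2$-estimates tensored with $I_X$; near the diagonal the leading singular behavior cancels, and one is left with a weakly singular integral whose positive scalar majorant is bounded on $L^2(\Omega, d\mu)$, and hence automatically on $L^2(\Omega, d\mu; X)$.

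For the implication $(b)\Rightarrow(a)$ I would transfer maximal regularity back to $\sqrt{\Delta}$ on $L^2(\R,X)$ by localization. Fix $x_0$ in the interior of $\Omega$ away from the singular locus, and choose a smooth cutoff $\phi$ supported in a small interval around $x_0$ where the density of $d\mu$ is bounded above and below. Conjugating $K(\{e^{-u\sqrt{\mathcal{L}}}\}_{u>0})$ with $\phi$, and controlling the commutator $[\sqrt{\mathcal{L}},\phi]$ by a bounded operator (using that, locally, $\mathcal{L}$ differs from $-d^2/dx^2$ by a bounded potential or a smooth first-order term), yields the boundedness of $K(\{P_u\}_{u>0})$ on $L^p((0,T),L^2(\R,X))$, and Theorem~\ref{ThA} then forces $X$ to be UMD.

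The main obstacle will be the extraction of uniform, unified kernel estimates for $\partial_t P_t^\mathcal{L}(x,y)$ that cover all operators in the table simultaneously, particularly near the natural singularities ($x=0$ for the Bessel and some Laguerre families, and at infinity for the Hermite and Laguerre families). A second delicate point is to ensure that the commutator estimates used in the $(b)\Rightarrow(a)$ direction remain valid in the weighted measure space $L^2(\Omega, d\mu)$ and are compatible with the tensorization by $X$; exploiting the unitary equivalences among the operators within each block of the table should, however, substantially reduce the case analysis.
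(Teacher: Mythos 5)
Your argument for $(a)\Rightarrow(b)$ is essentially the paper's: write the maximal regularity operator for $\sqrt{\mathcal{L}}$ as the classical one plus a correction, split the correction into local and global pieces, dominate each piece by a positive scalar kernel bounded on $L^2$ (so that tensorization with $X$ is free), and dispatch the remaining operators in each block by the unitary equivalences $U\circ W$. Two technical points you gloss over but which are needed and present in the paper: the classical Poisson operator lives on $L^2(\R,X)$ while the Bessel/Laguerre ones live on $L^2((0,\infty),X)$, so a half-line reduction ($K_+$ versus $K$, with the reflected part $K_-$ controlled by Hardy operators) is required; and in the Hermite case the comparison is only with the \emph{local} part $K^{loc}(\{P_u\}_{u>0})$ restricted to $|x-y|<\rho(x)$, so one still must prove that local boundedness of the classical operator is equivalent to its full boundedness (the paper's Lemma~\ref{Lem3.3}, via a dilation argument using that $\rho(x/R)\to 1/2$).

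Your $(b)\Rightarrow(a)$ route is genuinely different and is where the gap lies. First, the commutator $[\sqrt{\mathcal{L}},\phi]$ is a nonlocal operator of Calder\'on--Zygmund type; its boundedness on $L^2(\Omega,d\mu;X)$ for an \emph{arbitrary} Banach space $X$ is exactly the kind of statement that in general requires UMD, so invoking it here is circular unless you exhibit a positive scalar majorant --- which is what the paper's kernel estimates provide and your sketch does not. Second, even granting the commutator control, localization by $\phi$ only yields boundedness of a cutoff version of $K(\{P_u\}_{u>0})$ on functions supported near $x_0$; Theorem~\ref{ThA} needs boundedness on all of $L^2((0,T)\times\R,X)$, and the passage from local to global is a separate, nontrivial step (the scaling argument of Lemma~\ref{Lem3.3}). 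The cleaner observation, which the paper exploits and you miss, is that your own first-direction argument already gives the converse for free: since every correction term is bounded for \emph{every} Banach space $X$, boundedness of $K(\{e^{-u\sqrt{\mathcal{L}}}\}_{u>0})$ is \emph{equivalent} to boundedness of $K(\{P_u\}_{u>0})$ (after the half-line and local-to-global reductions), and Theorem~\ref{ThA} then characterizes UMD in both directions at once, with no commutators needed.
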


\begin{Rem}
    If $\mathcal{L}$ has non zero eigenvalue, which is the case of the Hermite and Laguerre type operators, we can take $T=\infty$ in Theorem~\ref{Th1}, $(b)$ (see, for instance
    \cite[Theorem 5.3.6]{ABHN} and  \cite[Corollary~4.2, Theorem~5.1 and Theorem~5.2]{Dor}).
\end{Rem}

If $\{T_t\}_{t>0}$ denotes the Poisson semigroup associated with $\mathcal{L}$, being $\mathcal{L}=S_\alpha$, $H$ or $L_\alpha^\varphi$,
to prove Theorem \ref{Th1} we will show that the operator $K(\{T_u\}_{u>0})$
is bounded from $L^2((0,T), L^2(\Omega , d\mu; X))$ into itself if, and only if, the operator $K(\{\tau_u\}_{u>0})$ is bounded from
$L^2((0,T), L^2(\Omega,d\mu;X))$ into itself, where $\tau_u=P_u$ when $\mathcal{L}=S_\alpha$ or $H$ and
$\tau_u=P_u^H$ when $\mathcal{L}=L_\alpha^\varphi$. Actually, we split the operator $K(\{T_u\}_{u>0})$ in two parts that we call local and global parts
$$K(\{T_u\}_{u>0})=K^{loc}(\{T_u\}_{u>0})+K^{glob}(\{T_u\}_{u>0}).$$
The operator $K^{glob}(\{T_u\}_{u>0})$ is bounded in $L^2((0,T), L^2(\Omega , d\mu; X))$ for every Banach space $X$.
On the other hand, the operator  $K^{loc}(\{T_u\}_{u>0})$ is bounded in $L^2((0,T), L^2(\Omega , d\mu; X))$ if, and only if, $K(\{\tau_u\}_{u>0})$
is bounded in  $L^2((0,T), L^2(\Omega,d\mu;X))$. Then, according to Theorem~\ref{ThA}, the $L^2((0,T), L^2(\Omega , d\mu; X))$-boundedness of $K(\{T_u\}_{u>0})$
characterizes the UMD property for $X$.

After that, and in order to prove Theorem~\ref{Th1} for the others operators contained in our chart,
we proceed as in \cite[p. 3160]{StiZh}, by making use of the following transference lemma.

\begin{Lem}\label{LemTransfer}
    Consider the operators $Uf(t,x)=M(x)f(t,x)$ and $Wf(t,x)=f(t,h(x))$, $x \in \Omega \subset \R$,  $t>0$,
    being $M, h \in C^\infty(\Omega)$ such that $M$ is positive and $h$ is a one-to-one function.
    Then, Theorem~\ref{Th1} $(b)$ is true for $\mathcal{L}$ if, and only if, it is true for
    $\overline{\mathcal{L}} = (U \circ W)^{-1} \circ \mathcal{L} \circ (U \circ W).$
\end{Lem}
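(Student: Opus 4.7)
The plan is to conjugate through the intertwining operator $V:=U\circ W$. Since $M$ and $h$ depend only on $x$, $V$ commutes with differentiation and integration in the time variable. Applying the functional calculus to the similarity $\overline{\mathcal{L}}=V^{-1}\mathcal{L}\,V$, I would deduce $\sqrt{\overline{\mathcal{L}}}=V^{-1}\sqrt{\mathcal{L}}\,V$, and consequently that the associated Poisson semigroups satisfy
\[\overline{T}_t := e^{-t\sqrt{\overline{\mathcal{L}}}}=V^{-1}\,T_t\,V,\qquad t>0,\]
where $T_t=e^{-t\sqrt{\mathcal{L}}}$. Substituting this identity into \eqref{D11} and pulling $V^{\pm 1}$ outside $\partial_t$ and the integral $\int_0^t ds$ then yields
\[K(\{\overline{T}_u\}_{u>0})=V^{-1}\circ K(\{T_u\}_{u>0})\circ V.\]

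Next I would verify that $V$ is a bounded isomorphism between the two underlying Lebesgue--Bochner spaces. Let $d\overline{\mu}$ denote the measure paired with $\overline{\mathcal{L}}$ in the chart. Because $h$ is a $C^\infty$ bijection and $M\in C^\infty(\Omega)$ is positive, the change-of-variables formula gives, for $g\in L^2(\Omega,d\overline{\mu};X)$,
\[\|Vg\|_{L^2(\Omega,d\mu;X)}^2=\int_\Omega\|M(x)\,g(h(x))\|_X^2\,d\mu(x),\]
and the right-hand side equals a positive constant multiple of $\|g\|_{L^2(\Omega,d\overline{\mu};X)}^2$ precisely because the measure $d\overline{\mu}$ in the chart has been chosen so that the push-forward of $M^2\,d\mu$ under $h^{-1}$ coincides, up to a constant, with $d\overline{\mu}$. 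Acting componentwise in $t$, $V$ thus defines a bounded isomorphism with bounded inverse
\[V\colon L^2((0,T),L^2(\Omega,d\overline{\mu};X))\longrightarrow L^2((0,T),L^2(\Omega,d\mu;X)).\]

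Combining the two displays, $K(\{T_u\}_{u>0})$ is bounded on $L^2((0,T),L^2(\Omega,d\mu;X))$ if and only if $K(\{\overline{T}_u\}_{u>0})$ is bounded on $L^2((0,T),L^2(\Omega,d\overline{\mu};X))$, which, as recalled in Section~\ref{sec:intro}, is in turn equivalent to the maximal $L^p$-regularity of $\sqrt{\overline{\mathcal{L}}}$. The step demanding most attention is the measure identification in the middle paragraph: it must be verified for each pair $(\mathcal{L},\overline{\mathcal{L}})$ drawn from the chart, but in every case it reduces to an explicit change of variables, after which the transference itself is purely algebraic.
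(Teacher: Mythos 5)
Your proposal is correct and follows essentially the same route as the paper: conjugation by $V=U\circ W$, the intertwining $P_t^{\overline{\mathcal{L}}}=V^{-1}P_t^{\mathcal{L}}V$ obtained from the spectral theorem (your functional-calculus step), and the observation that $d\overline{\mu}=M(h^{-1}(x))^2|J_{h^{-1}}|\,d\mu$ is precisely the measure making $V$ an isometry between the two Lebesgue--Bochner spaces.
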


\begin{proof}
    It is enough to prove that
    $K(\{P_u^{\mathcal{L}}\}_{u>0})$ is bounded from $L^2((0,T)\times \Omega,dt \times d\mu;X)$ into itself if, and only if,
    $K(\{P_u^{\overline{\mathcal{L}}}\}_{u>0})$ is bounded from $L^2((0,T)\times \Omega,dt \times d\overline{\mu};X)$ into itself.
    Here $d\overline{\mu}$ represents the measure $d\overline{\mu} = M(h^{-1}(x))^2 |J_{h^{-1}}|d\mu$, and $|J_{h^{-1}}|$ is the Jacobian of the
    inverse mapping $h^{-1}$.

    Suppose that $K(\{P_u^{\mathcal{L}}\}_{u>0})$ is bounded in $L^2((0,T)\times \Omega,dt \times d\mu;X)$.
    We can write, via the spectral theorem,
    \begin{align*}
        &\left\langle (U \circ W) P_t^{\overline{\mathcal{L}}} f, (U \circ W) g \right\rangle_{L^2((0,T)\times \Omega,dt \times d\mu;X)}
            = \left\langle P_t^{\overline{\mathcal{L}}} f, g \right\rangle_{L^2((0,T)\times \Omega,dt \times d\overline{\mu};X)} \\
        & \qquad  = \int_0^\infty e^{-t \sqrt{\lambda}} d\overline{E}_{f,g}(\lambda)
             = \int_0^\infty e^{-t \sqrt{\lambda}} dE_{(U \circ W)f,(U \circ W)g}(\lambda) \\
        & \qquad  = \left\langle P_t^{\mathcal{L}} (U \circ W)f, (U \circ W)g \right\rangle_{L^2((0,T)\times \Omega,dt \times d\mu;X)},
        \quad f,g \in L^2((0,T)\times \Omega,dt \times d\overline{\mu};X),
    \end{align*}
    being $E$ and $\overline{E}$ the resolutions of the identity of $\mathcal{L}$ and $\overline{\mathcal{L}}$, respectively.
    Hence,
    \begin{align*}
       & \| K(\{P_u^{\overline{\mathcal{L}}}\}_{u>0}) (f)\|_{L^2((0,T)\times \Omega,dt \times d\overline{\mu};X)}
             = \| K(\{P_u^{\mathcal{L}}\}_{u>0}) ((U \circ W)f)\|_{L^2((0,T)\times \Omega,dt \times d\mu;X)} \\
       & \qquad \leq C \|(U \circ W)f\|_{L^2((0,T)\times \Omega,dt \times d\mu;X)}
           = C \| f \|_{L^2((0,T)\times \Omega,dt \times d\overline{\mu};X)}, \quad f \in L^2((0,T)\times \Omega,dt \times d\overline{\mu};X),
    \end{align*}
    because $U \circ W$ is an isometry from $L^2((0,T)\times \Omega,dt \times d\overline{\mu};X)$ into $L^2((0,T)\times \Omega,dt \times d\mu;X)$.
    The same argument allows us to prove the reverse direction.
\end{proof}

We would like to remark that the implication $(a)\Rightarrow(b)$ in Theorem~\ref{Th1} can be obtained
as a special case of stronger results about maximal regularity and
functional calculus (see \cite[Chapter 10]{KuWe}). However, the
important part of our Theorem~\ref{Th1} is just the other
direction, which give us new
characterizations of UMD Banach spaces. Furthermore, with our
method it doesn't cost more effort to prove one than two
directions of each implications in this theorem.

In Sections~\ref{sec:Bessel}, \ref{sec:Hermite} and \ref{sec:Laguerre} we prove Theorem \ref{Th1} in the Bessel, Hermite and Laguerre settings, respectively.

Throughout this paper by $C,c>0$ we always denote positive constants that can change in each occurrence.

\section{Proof of Theorem~\ref{Th1} for Bessel type operators}\label{sec:Bessel}

\subsection{Bessel operator $S_\alpha$}

We consider for every $\alpha>0$ the Bessel operator
$$S_\alpha
    =-x^{-\alpha}{d\over{dx}}x^{2\alpha}{d\over{dx}}x^{-\alpha}
    = - \frac{d^2}{dx^2} + \frac{\alpha(\alpha-1)}{x^2}, \quad x \in (0,\infty).$$
If $J_{\nu}$ denotes the Bessel function of first kind and order $\nu$, we have that
$$S_\alpha\left[\sqrt{xy}J_{\alpha -1/2}(xy)\right]=y^2\sqrt{xy}J_{\alpha -1/2}(xy), \quad x\in(0,\infty).$$

According to \cite[(16.4)]{MS} the Poisson kernel for $S_\alpha$ is the following
$$P_t^{S_\alpha}(x,y)={{2\alpha(xy)^{\alpha}t}\over{\pi}}\int_0^{\pi}{{{(\sin\theta)^{2\alpha -1}}\over{\left[|x-y|^2+t^2+2xy(1-\cos\theta)\right]^{\alpha +1}}}d\theta}, \quad t,x,y\in(0,\infty),$$
and the Poisson integral $P_t^{S_\alpha}(f)$ of $f\in L^p(0,\infty)$, $1\leq p\leq\infty$, is defined by
$$P_t^{S_\alpha}(f)(x)=\int_0^{\infty}{P_t^{S_\alpha}(x,y)f(y)dy}, \quad x\in (0,\infty), \ t>0.$$
The Bessel-Poisson semigroup $\{P_t^{S_\alpha}\}_{t>0}$ is positive and bounded in $L^p(0,\infty)$, $1\leq p\leq\infty$. Moreover, $\{P_t^{S_\alpha}\}_{t>0}$
is contractive in $L^p(0,\infty)$, $1\leq p\leq\infty$, when $\alpha >1$ (\cite[Proposition 6.1]{NoSt3}).
Harmonic analysis associated with Bessel operators was initiated by Muckenhoupt and Stein (\cite{MS}). In the last years, this Bessel harmonic
analysis has been developed in a Banach valued setting (see \cite{BCR1} and \cite{BFMT}).

We denote by $C_c^\infty(\mathcal{R})$ the space of smooth functions in $\mathcal{R}$ with compact support, where $\mathcal{R}=\R\setminus\{0\}$
or $\mathcal{R}=(0,\infty)$. It is well-known that $C_c^\infty(0,\infty)$ is a dense subspace of $L^2(0,\infty)$ and $C_c^\infty(\R\setminus \{0\})$
is dense in $L^2(\R)$.

We consider the operators $K(\{P^{S_\alpha}_u\}_{u>0})$ and $K(\{P_u\}_{u>0})$ defined, for each $t,x\in (0,\infty)$, by
$$K(\{P^{S_\alpha}_u\}_{u>0})(f)(t,x)
    =\int_0^t{\partial_tP^{S_\alpha}_{t-s}(f(s,\cdot))(x)ds}, \quad f \in C_c^\infty(0, T)\otimes C_c^\infty(0,\infty)\otimes X,$$
and for $t\in (0,\infty)$, $x\in\R$,
$$K(\{P_u\}_{u>0})(f)(t,x)
    =\int_0^t{\partial_tP_{t-s}(f(s,\cdot))(x)ds}, \quad f\in C_c^\infty(0, T)\otimes C_c^\infty(\R \setminus \{0\})\otimes X.$$
Note that, for every $f \in C_c^\infty(0, T)\otimes
C_c^\infty(0,\infty)\otimes X$,
$$\left( \int_0^t \partial_t P_{t-s}(f(s,\cdot)) ds \right)(x)
    = \int_0^t \partial_t P_{t-s}(f(s,\cdot))(x) ds, \quad \text{a.e. } x \in \R,$$
where the first integral is understood in the $L^2(\R,X)$-Bochner sense and the integral in the right hand side is understood in the $X$-Bochner sense.
Analogous properties hold when Hermite, Bessel and Laguerre Poisson semigroups replace the classical Poisson semigroup.

We will prove that $K(\{P^{S_\alpha}_u\}_{u>0})$ is bounded from
$L^2((0,T) \times (0,\infty),X)$ into itself if, and only if,
$K(\{P_u\}_{u>0})$ is bounded from $L^2((0,T)\times\R,X)$ into
itself. Then, by using Theorem \ref{ThA} we deduce
$(a)\Leftrightarrow(b)$ in Theorem \ref{Th1} for the Bessel
operator $S_\alpha$.

We define the operator $K_+(\{P_u\}_{u>0})$, for each $t,x\in (0,\infty)$,  as follows
$$K_+(\{P_u\}_{u>0})(f)(t,x)
    =\int_0^t {\int_0^\infty{\partial_t P_{t-s}(x-y)f(s,y)dy}ds},\quad f \in C_c^\infty(0, T)\otimes C_c^\infty(0,\infty)\otimes X.$$
In the first step we reduce the boundedness of $K(\{P_u\}_{u>0})$
in $L^2((0,T)\times\R,X)$ to the bounded\-ness of
$K_+(\{P_u\}_{u>0})$ in $L^2((0,T) \times (0,\infty),X)$.

\begin{Lem}\label{Lem2.1}
    Let $X$ be a Banach space and $T>0$. Then, $K(\{P_u\}_{u>0})$ is bounded in $L^2((0,T)\times\R,X)$ if, and only if,
    $K_+(\{P_u\}_{u>0})$ is bounded in $L^2((0,T) \times (0,\infty),X)$.
\end{Lem}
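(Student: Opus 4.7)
The plan is a reflection argument that exploits the evenness of the Poisson kernel $P_t$, with a single Schur-test calculation handling the auxiliary estimate.

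For the $(\Leftarrow)$ direction: given $f \in C_c^\infty(0,T) \otimes C_c^\infty(0,\infty) \otimes X$, let $\tilde f$ denote its extension by zero across $\{y\leq 0\}$. Then $\tilde f$ lies in the test-function class used to define $K(\{P_u\}_{u>0})$, $\|\tilde f\|_{L^2((0,T)\times\R,X)} = \|f\|_{L^2((0,T)\times(0,\infty),X)}$, and for every $t\in(0,T)$ and $x>0$ the integral defining $K(\{P_u\}_{u>0})(\tilde f)(t,x)$ sees only $y>0$, so it coincides with $K_+(\{P_u\}_{u>0})(f)(t,x)$. The $L^2$-bound for $K(\{P_u\}_{u>0})$ is thereby transferred to $K_+(\{P_u\}_{u>0})$.

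For the $(\Rightarrow)$ direction, suppose $K_+(\{P_u\}_{u>0})$ is bounded with some constant $M$. Given $f \in C_c^\infty(0,T)\otimes C_c^\infty(\R\setminus\{0\})\otimes X$, split $f = f_+ + f_-$ with $f_{\pm}$ supported in $\pm y>0$. The change of variables $y\mapsto -y$, combined with $P_t(z)=P_t(-z)$, reduces the estimate for $f_-$ to that for $f_+$, so we focus on $f_+$. When $x>0$ the integration runs only over $y>0$ and gives $K(\{P_u\}_{u>0})(f_+)(t,x) = K_+(\{P_u\}_{u>0})(f_+)(t,x)$, controlled by $M\|f_+\|$ by hypothesis. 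When $x=-\xi<0$, the evenness of $P_t$ gives
$$K(\{P_u\}_{u>0})(f_+)(t,-\xi) = \int_0^t \int_0^\infty \partial_t P_{t-s}(\xi+y)\, f_+(s,y)\, dy\, ds =: Q(f_+)(t,\xi).$$

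The main technical step is to show that $Q$ is bounded on $L^2((0,T)\times(0,\infty),X)$ uniformly in $X$ and in $T$. Since $|\partial_t P_{t-s}(\xi+y)| \leq C/((\xi+y)^2+(t-s)^2)$, the vector-valued bound is dominated pointwise by the positive scalar operator with kernel $\mathbf{1}_{s<t}/((\xi+y)^2+(t-s)^2)$, and I will verify Schur's test with weight $w(s,y)=y^{-1/2}$. The substitution $y=\xi u$ reduces the inner integral to $\xi^{-3/2}$ times an integral in $u$ depending on $a=(t-s)/\xi$ whose size is $O((1+a^{3/2})^{-1})$; integrating in $s$ then gives
$$\int_0^t \int_0^\infty \frac{y^{-1/2}}{(\xi+y)^2+(t-s)^2}\, dy\, ds \leq C\,\xi^{-1/2},$$
uniformly in $t\in(0,T)$ and in $T$. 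The dual estimate is identical after interchanging $(\xi,t)$ with $(y,s)$, so Schur's lemma yields the $L^2$-bound for $Q$. Combining with the same-sign piece and summing over $f_{\pm}$ closes the argument.

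The only nontrivial step is the Schur verification; everything else is soft. The key conceptual point is that the kernel defining $Q$ is genuinely non-singular because $\xi+y$ never vanishes, so $Q$ absorbs no UMD content. All the UMD information is packaged into the equivalence between $K(\{P_u\}_{u>0})$ and $K_+(\{P_u\}_{u>0})$, with $Q$ playing only a subsidiary, elementary role.
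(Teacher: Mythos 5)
Your proof is correct and follows essentially the same route as the paper: extension by zero for one implication, and for the other the reflection splitting $f=f_++f_-$ together with the observation that the cross term (your $Q$, the paper's $K_-$) has a non-singular kernel and is bounded for every Banach space $X$. The only divergence is in how that cross term is estimated: the paper dominates it pointwise by the Hardy operators $H_0$, $H_\infty$ composed with the Poisson maximal operator $P_*$ acting in the time variable, whereas you run a Schur test with weight $y^{-1/2}$ on the space-time kernel $\mathbf{1}_{\{s<t\}}/\bigl((\xi+y)^2+(t-s)^2\bigr)$; both verifications are elementary and your Schur computation checks out.
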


\begin{proof}
    Let $f\in C_c^\infty(0, T)\otimes C_c^\infty(0,\infty)\otimes X$. We defined for every $t \in (0,T)$,
    \begin{equation}\label{5.1}
        f_0(t,x)=\left\{\begin{array}{ll}
            f(t,x), & x>0, \\
            0, & x\leq 0.
            \end{array}\right.
    \end{equation}
    It is clear that $f_0\in C_c^\infty(0, T)\otimes C_c^\infty(\R \setminus \{0\})\otimes X$ and
    $$K_+(\{P_u\}_{u>0})(f)(t,x)
        =K(\{P_u\}_{u>0})(f_0)(t,x), \quad t,x\in (0,\infty).$$
    Then, if $K(\{P_u\}_{u>0})$ is bounded in $L^2((0,T)\times\R,X)$, $K_+(\{P_u\}_{u>0})$ is bounded in $L^2((0,T) \times (0,\infty),X)$.

    Suppose now that $K_+(\{P_u\}_{u>0})$ is bounded in $L^2((0,T) \times (0,\infty),X)$ and let
    $f \in C_c^\infty(0, T)\otimes C_c^\infty(\R \setminus \{0\})\otimes X$.
    We can write for every $t>0$ and $x\in\R$,
    \begin{align*}
        K(\{P_u\}_{u>0})(f)(t,x)
            = & \int_0^t{\int_{\R}{{\partial}_tP_{t-s}(x-y)f(s,y)dy}ds} \\
            = & \int_0^t{\int_0^\infty{{\partial}_tP_{t-s}(x-y)f(s,y)dy}ds}+ \int_0^t{\int_0^\infty{{\partial}_tP_{t-s}(x+y)f(s,-y)dy}ds},
    \end{align*}
    and
    \begin{align}\label{D1}
        & \int_0^{\infty}\int_{\R}  \|K(\{P_u\}_{u>0})(f)(t,x)\|_X^2dxdt
            = \int_0^{\infty}\int_0^{\infty}\|K(\{P_u\}_{u>0})(f)(t,x)\|_X^2dxdt \nonumber \\
        & \qquad \qquad + \int_0^{\infty}\int_0^{\infty}\|K(\{P_u\}_{u>0})(f)(t,-x)\|_X^2dxdt \nonumber \\
        & \leq 2\Big[\|K_+(\{P_u\}_{u>0})(f_+)\|_{L^2((0,T) \times (0,\infty),X)}^2+\|K_+(\{P_u\}_{u>0})(f_-)\|_{L^2((0,T) \times (0,\infty),X)}^2 \nonumber \\
        &\qquad \qquad +\|K_-(\{P_u\}_{u>0})(f_+)\|_{L^2((0,T) \times (0,\infty),X)}^2+\|K_-(\{P_u\}_{u>0})(f_-)\|_{L^2((0,T) \times (0,\infty),X)}^2\Big],
    \end{align}
    where $f_+(t,x)=f(t,x)$, $f_-(t,x)=f(t,-x)$, $t,x\in (0,\infty)$, and,
    $$K_-(\{P_u\}_{u>0})(g)(t,x)
        =\int_0^t{\int_0^\infty{{\partial}_tP_{t-s}(x+y)g(s,y)dy}ds}, \quad g \in C_c^\infty(0, T)\otimes C_c^\infty(0,\infty)\otimes X.$$

    The operator $K_-$ is bounded from $L^2((0,T) \times (0,\infty),X)$ into itself. Indeed, we have that
    $$|{\partial}_tP_{t-s}(x+y)|
        ={1\over{\pi}}\left|{{|t-s|^2-(x+y)^2}\over{(|t-s|^2+(x+y)^2)^2}}\right|\leq {1\over{\pi}}{1\over{|t-s|^2+(x+y)^2}},\quad s,t,x,y \in (0,\infty).$$
    Hence,
    \begin{align}\label{D.2}
        & \|K_-(\{P_u\}_{u>0})(g)(t,x)\|_X\leq{1\over{\pi}}\int_0^t{\int_0^{\infty}{{1\over{|t-s|^2+(x+y)^2}}\|g(s,y)\|_Xdy}ds} \nonumber \\
        & \qquad \leq{1\over{\pi}}\int_0^{\infty}{\left(\int_0^x{{1\over{|t-s|^2+x^2}}\|g(s,y)\|_Xdy}+\int_x^{\infty}{{1\over{|t-s|^2+y^2}}\|g(s,y)\|_Xdy}\right)ds} \nonumber\\
        & \qquad \leq{1\over{\pi}}\left({1\over x}\int_0^x{P_*(\|\tilde{g_0}(\cdot,y)\|_X)(t)dy}+\int_x^{\infty}{{1\over y}P_*(\|\tilde{g_0}(\cdot,y)\|_X)(t)dy}\right),\quad t,x\in (0,\infty),
    \end{align}
    where, for every $y \in (0,\infty)$,
    \begin{equation}\label{ext}
        \tilde{g_0}(t,y)=
            \left\{\begin{array}{ll}
                g_0(t,y), & t \in (0,T), \\
                0, & t \notin (0,T),
            \end{array}\right.
    \end{equation}
    and $P_*$
    represents the maximal operator defined by
    $$P_*(h)=\sup_{u>0}|P_u(h)|, \quad h\in L^2(\R).$$
    According to \cite[p. 244, $(9.9.1)$ and $(9.9.2)$]{HLP}, the Hardy operators $H_0$ and $H_{\infty}$ given by
    $$H_0(h)(x)={1\over x}\int_0^x{h(y)dy}, \quad x\in (0,\infty),$$
    and
    $$H_{\infty}(h)(x)=\int_x^{\infty}{{h(y)\over y}dy}, \quad x\in (0,\infty),$$
    are bounded in $L^2(0,\infty)$. Also, the maximal operator $P_*$ is bounded in $L^2(\R)$.
    Then, from \eqref{D.2} we deduce that $K_-$ is bounded in $L^2((0,T) \times (0,\infty),X)$.

    By \eqref{D1} it follows that $K(\{P_u\}_{u>0})$ is bounded from $L^2((0,T)\times\R,X)$ into itself provided that $K_+(\{P_u\}_{u>0})$
    is bounded from $L^2((0,T) \times (0,\infty),X)$ into itself.
\end{proof}

We now consider, for every $t,x \in (0,\infty)$ and $f \in
C_c^\infty(0, T)\otimes C_c^\infty(0,\infty)\otimes X$,
the following ``local'' and ``global'' operators \\

$\displaystyle \bullet \ K_+^{loc}(\{P_u\}_{u>0})f(t,x)
    =\int_0^t{\int_{x/2}^{2x}{{\partial}_tP_{t-s}(x-y)f(s,y)dyds}}, $ \\

$\displaystyle \bullet \ K^{glob}_+(\{P_u\}_{u>0})f(t,x)
    =K_+(\{P_u\}_{u>0})f(t,x)-K_+^{loc}(\{P_u\}_{u>0})f(t,x),$ \\

$\displaystyle \bullet \ K^{loc}(\{P_u^{S_\alpha}\}_{u>0})f(t,x)
    =\int_0^t{\int_{x/2}^{2x}{{\partial}_tP_{t-s}^{S_\alpha}(x,y)f(s,y)dyds}}, $ \\
\noindent and

$\displaystyle \bullet \ K^{glob}(\{P_u^{S_\alpha}\}_{u>0})f(t,x)
    =K(\{P_u^{S_\alpha}\}_{u>0})f(t,x)-K^{loc}(\{P_u^{S_\alpha}\}_{u>0})f(t,x).$\\
We are going to see that the ``global'' operators are bounded in
$L^2((0,T) \times (0,\infty),X)$.

\begin{Lem}\label{Lem2.2}
    Let $X$ be a Banach space and $T>0$. Then, the operators $K^{glob}_+(\{P_u\}_{u>0})$ and $K^{glob}(\{P_u^{S_\alpha}\}_{u>0})$
    are bounded from $L^2((0,T) \times (0,\infty),X)$ into itself.
\end{Lem}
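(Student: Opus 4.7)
The plan is to bound both global operators by the same scheme, extending the argument used for $K_-$ in the proof of Lemma~\ref{Lem2.1}. The core observation is that in the off-diagonal region $y \notin (x/2, 2x)$, both kernels $\partial_t P_{t-s}(x-y)$ and $\partial_t P_{t-s}^{S_\alpha}(x,y)$ admit a common pointwise estimate of the form
\[
\left| \partial_t P_{t-s}(x-y) \right| + \left| \partial_t P_{t-s}^{S_\alpha}(x,y) \right| \leq \frac{C}{(t-s)^2 + (\max(x,y))^2}.
\]
Once this is secured, the rest follows by essentially repeating the Hardy--plus--maximal calculation already present in the proof of Lemma~\ref{Lem2.1}.

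The first step is to derive these pointwise bounds. For the classical Poisson kernel this is immediate from $|\partial_t P_t(z)| \leq C/(t^2+z^2)$ together with the elementary observation that $y \notin (x/2,2x)$ forces $|x-y| \geq \max(x,y)/2$. For the Bessel--Poisson kernel I would differentiate under the $\theta$-integral in the explicit representation, use the bound $|\partial_t(t/A^{\alpha+1})| \leq C/A^{\alpha+1}$ with $A = |x-y|^2 + t^2 + 2xy(1-\cos\theta)$, and observe that in the global region $A \geq |x-y|^2 + t^2 \geq c((\max(x,y))^2 + t^2)$. Combined with the trivial bound $(xy)^\alpha \leq (\max(x,y))^{2\alpha}$ and the finiteness of $\int_0^\pi (\sin\theta)^{2\alpha-1}\,d\theta$, this yields the desired kernel estimate.

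With the pointwise estimate in place, I would split the global region as $(0,x/2)\cup(2x,\infty)$ and note that $\max(x,y) = x$ on the first piece and $\max(x,y) = y$ on the second. Writing both global operators accordingly and using the identity $\frac{1}{u^2 + M^2} = \frac{\pi}{M}P_M(u)$, each inner $s$-integration becomes a convolution with the classical Poisson kernel of ``time'' $M$ and is therefore dominated by $\frac{\pi}{M} P_*(\|\tilde f(\cdot,y)\|_X)(t)$, where $\tilde f$ denotes the extension by zero outside $(0,T)$ in the $t$-variable, as in~\eqref{ext}. The resulting $y$-integrals over $(0,x/2)$ and $(2x,\infty)$ are then of Hardy type, dominated respectively by $H_0$ and $H_\infty$ applied to $y \mapsto P_*(\|\tilde f(\cdot,y)\|_X)(t)$.

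The conclusion would follow by taking $L^2$-norm in $x$ and invoking the boundedness of $H_0$ and $H_\infty$ on $L^2(0,\infty)$ from \cite[p.~244]{HLP}, and then $L^2$-norm in $t$ together with the $L^2(\R)$-boundedness of $P_*$ and Fubini; the final bound is controlled by $\|f\|_{L^2((0,T)\times(0,\infty),X)}^2$. The main obstacle I foresee is the derivation of the Bessel kernel bound from the explicit representation; the slightly subtle point is that the denominator appears to the power $\alpha+1$, so the numerator factor $(xy)^\alpha$ must be absorbed by the $\max$, which is precisely what the global-region geometry ensures. Once this estimate is secured, the remainder of the argument is a verbatim repetition of the computation already carried out for $K_-$ in Lemma~\ref{Lem2.1}.
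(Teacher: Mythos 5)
Your proposal is correct and follows essentially the same route as the paper: a pointwise kernel bound of the form $C/((t-s)^2+\max(x,y)^2)$ in the global region, domination of the $s$-integral by the Poisson maximal operator $P_*$, and the Hardy operators $H_0$, $H_\infty$ in the $y$-variable, all bounded on $L^2$. The only cosmetic difference is that the paper obtains the Bessel kernel estimate by citing \cite[p.~86, $(b)$]{MS}, whereas you derive it directly from the global-region geometry ($A\geq |x-y|^2\geq \max(x,y)^2/4$ absorbing the factor $(xy)^\alpha$), which is equally valid.
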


\begin{proof}
    Let $f \in C_c^\infty(0, T)\otimes C_c^\infty(0,\infty)\otimes X$.
    The argument used in the proof of Lemma~\ref{Lem2.1} allows us to get, for every $t,x \in (0,\infty)$,
    \begin{align*}
        & \|K_+^{glob}(\{P_u\}_{u>0})(f)(t,x)\|_X
            \leq C \left({1\over x}\int_0^{x/2}{P_*(\|\tilde f_0(\cdot ,y)\|_X)(t)dy}
            +\int_{2x}^{\infty}{{1\over y}P_*(\|\tilde f_0(\cdot ,y)\|_X)(t)dy}\right),
    \end{align*}
    where the extension $\tilde f_0$ of $f$ is defined as in \eqref{ext}.
    Then, $K_+^{glob}$ is bounded in $L^2((0,T) \times (0,\infty),X)$.

    On the other hand, we have that
    \begin{align*}
    \partial_tP_{t-s}^{S_\alpha}(x,y)
        = & {{2\alpha(xy)^{\alpha}}\over{\pi}}\left\{\int_0^{\pi}{{{(\sin\theta)^{2\alpha -1}}\over{\left[|x-y|^2+|t-s|^2+2xy(1-\cos\theta)\right]^{\alpha +1}}}d\theta} \right. \\
        & \left. -2(\alpha +1)|t-s|^2\int_0^{\pi}{{{(\sin\theta)^{2\alpha -1}}\over{\left[|x-y|^2+|t-s|^2+2xy(1-\cos\theta)\right]^{\alpha +2}}}d\theta}\right\}, \ s,t,x,y \in (0,\infty).
    \end{align*}

    According to \cite[p. 86, $(b)$]{MS} we obtain
    \begin{align*}
        \left|\partial_tP_{t-s}^{S_\alpha}(x,y)\right|
        \leq & C(xy)^{\alpha}\int_0^{\pi}{{{(\sin\theta)^{2\alpha -1}}\over{\left[|x-y|^2+|t-s|^2+2xy(1-\cos\theta)\right]^{\alpha +1}}}d\theta} \\
        \leq & C{1\over{|x-y|^2+|t-s|^2}}, \quad s,t,x,y \in (0,\infty).
    \end{align*}

    Then, as above we can prove that $K^{glob}(\{P_u^{S_\alpha}\}_{u>0})$ is bounded in $L^2((0,T) \times (0,\infty),X)$.
\end{proof}

Hence, we will finish the proof once we show that
$K_+^{loc}(\{P_u\}_{u>0})$ is bounded in $L^2((0,T) \times
(0,\infty),X)$ if, and only if,
$K^{loc}(\{P_u^{S_\alpha}\}_{u>0})$ is bounded in $L^2((0,T)
\times (0,\infty),X)$. We collect this property in the next Lemma.

\begin{Lem}\label{Lem2.3}
    Let $X$ be a Banach space and $T>0$. Then, the operator $D^{loc}$ given by
    $$D^{loc}=K^{loc}(\{P_u^{S_\alpha}\}_{u>0})-K_+^{loc}(\{P_u\}_{u>0}),$$
    is bounded in $L^2((0,T) \times (0,\infty),X)$.
\end{Lem}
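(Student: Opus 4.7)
The plan is to derive a pointwise estimate for the kernel of $D^{loc}$ that is free of Calder\'on--Zygmund-type singularities and then conclude $L^2$-boundedness by a Cauchy--Schwarz argument. Writing $\tau = t-s$, the operator $D^{loc}$ has integral kernel
$$K(\tau,x,y) = \partial_\tau P^{S_\alpha}_\tau(x,y) - \partial_\tau P_\tau(x-y), \quad y \in (x/2,2x),$$
and the target estimate is
$$|K(\tau,x,y)| \leq \frac{C}{(x+\tau)^2}, \quad y \in (x/2,2x),\ \tau>0.$$

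First I would rewrite both kernels in a common analytic form. Performing the change of variable $u = 2\sqrt{xy}\sin(\theta/2)$ in the Muckenhoupt--Stein representation of $P^{S_\alpha}_\tau(x,y)$ yields
$$P^{S_\alpha}_\tau(x,y) = \frac{2\alpha\tau}{\pi}\int_0^{2\sqrt{xy}} \frac{u^{2\alpha-1}(1-u^2/(4xy))^{\alpha-1}}{[(x-y)^2+\tau^2+u^2]^{\alpha+1}}du,$$
while the elementary identity $\int_0^\infty u^{2\alpha-1}(a+u^2)^{-(\alpha+1)}du = (2\alpha a)^{-1}$ identifies the classical Poisson kernel as
$$P_\tau(x-y) = \frac{2\alpha\tau}{\pi}\int_0^\infty \frac{u^{2\alpha-1}}{[(x-y)^2+\tau^2+u^2]^{\alpha+1}}du.$$
Subtracting produces an exact splitting $P^{S_\alpha}_\tau(x,y)-P_\tau(x-y) = A(\tau,x,y) - B(\tau,x,y)$, where $A$ is the integral on $(0,2\sqrt{xy})$ containing the factor $(1-u^2/(4xy))^{\alpha-1}-1$ and $B$ is the tail integral on $(2\sqrt{xy},\infty)$.

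Next I would differentiate this identity in $\tau$ and bound the resulting integrals on the local region $y\in(x/2,2x)$, where $\sqrt{xy}\sim x$. For the ``correction'' piece I would split the $u$-integral at $\sqrt{xy}$: on $(0,\sqrt{xy})$ the inequality $|(1-v)^{\alpha-1}-1|\leq C_\alpha v$ for $v\in(0,1/4)$ gains an extra factor $u^2/(xy)\sim u^2/x^2$, which, combined with $[(x-y)^2+\tau^2+u^2]^{-(\alpha+1)}\leq u^{-2(\alpha+1)}$, yields after integration a bound of order $x^{-2}$; on $(\sqrt{xy},2\sqrt{xy})$ one uses $u\sim x$ together with the integrability on $(0,1)$ of $(1-v)^{\alpha-1}$ (valid for every $\alpha>0$). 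For the tail piece $B$ and its $\tau$-derivative I would use the interpolating bound $[(x-y)^2+\tau^2+u^2]^{\alpha+1}\geq [(x-y)^2+\tau^2+u^2] \cdot u^{2\alpha}$ together with $u\geq \sqrt{xy}\sim x$. Collecting the four contributions and keeping track of the $\tau$-factors produced by the differentiation gives $|K(\tau,x,y)|\leq C(x+\tau)^{-2}$ uniformly in the local region.

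Finally, a direct computation shows $\int_0^t\int_{x/2}^{2x}(x+t-s)^{-2}\,dyds\leq C$ uniformly in $(t,x)$, hence Cauchy--Schwarz in $(s,y)$ gives
$$\|D^{loc}f(t,x)\|_X^2 \leq C\int_0^t\int_{x/2}^{2x}\frac{\|f(s,y)\|_X^2}{(x+t-s)^2}\,dyds.$$
Integrating in $(t,x)\in(0,T)\times(0,\infty)$, exchanging the order of integration (for $s<t$, the condition $y\in(x/2,2x)$ becomes $x\in(y/2,2y)$), and using the symmetric bound $\int_s^T\int_{y/2}^{2y}(x+t-s)^{-2}\,dxdt\leq C$ produces $\|D^{loc}f\|_{L^2}\leq C\|f\|_{L^2}$, as desired. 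The main obstacle is the pointwise kernel estimate: when $\alpha\in(0,1)$ the factor $(1-u^2/(4xy))^{\alpha-1}$ is singular at the upper endpoint $u=2\sqrt{xy}$, so the splitting at $\sqrt{xy}$ must be carried out with care to ensure the ``correction'' piece and the tail piece each contribute only at the regular scale $(x+\tau)^{-2}$.
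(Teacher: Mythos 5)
Your reduction of the problem to a pointwise kernel estimate plus a Schur-type test is structurally sound, and the change of variables $u=2\sqrt{xy}\sin(\theta/2)$ together with the identity $\int_0^\infty u^{2\alpha-1}(a+u^2)^{-(\alpha+1)}du=(2\alpha a)^{-1}$ gives a clean exact splitting of $P_\tau^{S_\alpha}(x,y)-P_\tau(x-y)$ that is arguably tidier than the paper's chain of intermediate kernels $P^{S_\alpha,1}, P^{S_\alpha,1,1}, P^{S_\alpha,1,2}$. However, the target estimate $|K(\tau,x,y)|\leq C(x+\tau)^{-2}$ is false when $\alpha\neq 1$, and the step you use to justify it does not work. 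On $(0,\sqrt{xy})$ the gain $|(1-u^2/(4xy))^{\alpha-1}-1|\leq C u^2/(xy)$ combined with the crude bound $[(x-y)^2+\tau^2+u^2]^{-(\alpha+1)}\leq u^{-2(\alpha+1)}$ leaves you with $\int_0^{\sqrt{xy}} (x^2 u)^{-1}\,du$, which diverges at $u=0$; you must keep $(x-y)^2+\tau^2$ in the denominator near $u=0$, and the correct computation (split at $u=\sqrt{(x-y)^2+\tau^2}$) produces
$$\frac{C}{xy}\,\Bigl[1+\log_+\frac{xy}{(x-y)^2+\tau^2}\Bigr],$$
not $Cx^{-2}$. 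This logarithm is genuinely present in the kernel: expanding $(1-u^2/(4xy))^{\alpha-1}-1\approx -(\alpha-1)u^2/(4xy)$ shows the difference of derivatives behaves like $c_\alpha (xy)^{-1}\log\bigl(xy/((x-y)^2+\tau^2)\bigr)$ near the diagonal. This is exactly why the paper's estimate for its pieces $D_1^{loc}$ and $D_2^{loc}$ carries the factor $1+\log_+(x/|x-y|)$ and why it then invokes Jensen's inequality to bound the resulting averaging operator $\mathcal{A}$ on $L^2(0,\infty)$.

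The good news is that your argument is repairable with essentially no change to its architecture: your tail piece $B$ does satisfy the clean bound $C/((x-y)^2+\tau^2+xy)\leq C/(x^2+\tau^2)$ on the local region, and once you replace the false bound on the correction piece by $\frac{C}{x^2+\tau^2}\bigl(1+\log_+\frac{x}{|x-y|}\bigr)$, your final Schur/Cauchy--Schwarz step still closes, since $\frac1x\int_{x/2}^{2x}\bigl(1+\log_+\frac{x}{|x-y|}\bigr)dy$ and $\int_0^\infty \frac{x\,d\tau}{x^2+\tau^2}$ are both uniformly bounded (and symmetrically in the adjoint direction). So the gap is a concrete, fixable error in one integration, but as written the central claimed estimate is incorrect and its proof contains a divergent integral.
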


\begin{proof}
    Let $f \in C_c^\infty(0, T)\otimes C_c^\infty(0,\infty)\otimes X$.
    We split the operator $K^{loc}(\{P_u^{S_\alpha}\}_{u>0})$ as follows
    $$K^{loc}(\{P_u^{S_\alpha}\}_{u>0})=K_1^{loc}(\{P_u^{S_\alpha}\}_{u>0})+K_2^{loc}(\{P_u^{S_\alpha}\}_{u>0}),$$
    being,
    $$K_1^{loc}(\{P_u^{S_\alpha}\}_{u>0})(f)(t,x)
        =\int_0^t{\int_{x/2}^{2x}{\partial_tP_{t-s}^{S_\alpha,1}(x,y)f(s,y)dy}ds}, \quad t,x\in (0,\infty),$$
    and
    $$P_t^{S_\alpha,1}(x,y)
        ={{2\alpha(xy)^{\alpha}}\over{\pi}}\int_0^{\pi / 2}{{{t(\sin\theta)^{2\alpha -1}}\over{\left[|x-y|^2+t^2+2xy(1-\cos\theta)\right]^{\alpha +1}}}d\theta}, \quad t,x,y\in (0,\infty).$$
    By defining
    $$P_t^{S_\alpha,2}(x,y)
        =P_t^{S_\alpha}(x,y)-P_t^{S_\alpha,1}(x,y), \quad t,x,y\in (0,\infty),$$
    we can write
    \begin{align*}
        \left|\partial_tP_{t-s}^{S_\alpha,2}(x,y)\right|
            \leq & C(xy)^{\alpha}\int_{\pi / 2}^{\pi}{{{(\sin\theta)^{2\alpha -1}}\over{\left[|x-y|^2+|t-s|^2+2xy(1-\cos\theta)\right]^{\alpha +1}}}d\theta} \\
            \leq & C{1\over{|t-s|^2+x^2}}, \quad s,t,x \in (0,\infty), \ {x/2}<y<2x.
    \end{align*}

    Hence, we get
    $$\|K_2^{loc}(\{P_u^{S_\alpha}\}_{u>0})(f)(t,x)\|_X
        \leq {C\over x}\int_{x/2}^{2x}{P_*(\|\tilde f_0(\cdot,y)\|_X)(t)dy,\quad t,x\in (0,\infty)}.$$
    Then, since the operator $\mathbb{L}$ defined by
    $$\mathbb{L}(h)(x)={1\over x}\int_{x/2}^{2x}{h(y)dy},\quad x\in (0,\infty),$$
    is bounded in $L^2(0,\infty)$, we conclude that $K_2^{loc}(\{P_u^{S_\alpha}\}_{u>0})$ is bounded in $L^2((0,T) \times (0,\infty),X)$.

    We consider the kernels
    $$P_t^{S_\alpha  ,1,1}(x,y)
        ={{2\alpha(xy)^{\alpha}}\over{\pi}}\int_0^{\pi / 2}{{{t\theta^{2\alpha -1}}\over{\left[|x-y|^2+t^2+2xy(1-\cos\theta)\right]^{\alpha +1}}}d\theta}, \quad t,x,y\in (0,\infty),$$
    $$P_t^{S_\alpha  ,1,2}(x,y)
        ={{2\alpha(xy)^{\alpha}}\over{\pi}}\int_0^{\pi / 2}{{{t\theta^{2\alpha -1}}\over{\left[|x-y|^2+t^2+xy\theta^2\right]^{\alpha +1}}}d\theta}, \quad t,x,y\in (0,\infty).$$
    and we define the operators
    $$D_1^{loc}(f)(t,x)
        =\int_0^t{\int_{x/2}^{2x}{\partial_t\left[P_{t-s}^{S_\alpha,1}(x,y)-P_{t-s}^{S_\alpha  ,1,1}(x,y)\right]f(s,y)dy}ds}, \quad t,x\in (0,\infty)$$
    and
    $$D_2^{loc}(f)(t,x)
        =\int_0^t{\int_{x/2}^{2x}{\partial_t\left[P_{t-s}^{S_\alpha  ,1,1}(x,y)-P_{t-s}^{S_\alpha  ,1,2}(x,y)\right]f(s,y)dy}ds}, \quad t,x\in (0,\infty).$$

    In order to estimate the kernels of $D_1^{loc}$ and $D_2^{loc}$ we proceed as in \cite[pp. 482--485]{BCFR1}.
    By using mean value theorem, we get for every $s,t,x \in (0,\infty)$ and $x/2<y<2x$,
    \begin{align*}
        & \left|\partial_t \left[P_{t-s}^{S_\alpha,1}(x,y)-P_{t-s}^{S_\alpha  ,1,1}(x,y)\right]\right|
            +\left| \partial_t \left[P_{t-s}^{S_\alpha  ,1,1}(x,y)-P_{t-s}^{S_\alpha  ,1,2}(x,y)\right]\right| \\
        & \qquad \qquad \leq C(xy)^{\alpha}\int_0^{\pi / 2}{{{\theta^{2\alpha +1}}\over{\left[|x-y|^2+|t-s|^2+xy\theta^2\right]^{\alpha +1}}}d\theta} \\
        & \qquad \qquad \leq C{{(xy)^{\alpha}}\over{\left[|x-y|^2+|t-s|^2+xy\right]^{\alpha +1}}}\left[1+\log_+\left({{xy}\over{|x-y|^2+|t-s|^2}}\right)\right].
    \end{align*}
    Hence, for $j=1,2$,
    \begin{align*}
        \|D_j^{loc}(f)(t,x)\|_X
            \leq &  C\int_0^t{\int_{x/2}^{2x}{{1\over{x^2+|t-s|^2}}\left(1+\log_+{{x}\over{|x-y|}}\right)\|f(s,y)\|_Xdy}ds} \\
            \leq & C{1\over x}\int_{x/2}^{2x}{\left(1+\log_+{{x}\over{|x-y|}}\right)P_*(\|\tilde f_0(\cdot,y)\|_X)(t)dy}, \quad t,x\in (0,\infty).
    \end{align*}
    Since
    $$ {1\over x}\int_{x/2}^{2x}{\left(1+\log_+{{x}\over{|x-y|}}\right)dy}=\int_{1/ 2}^2{\left(1+\log_+{1\over{|1-z|}}\right)dz}, \quad x\in (0,\infty),$$
    Jensen's inequality allows us to show that the operator $\mathcal{A}$ defined by
    $$\mathcal{A}(g)(x)
        ={1\over x}\int_{x/2}^{2x}{\left(1+\log_+{{x}\over{|x-y|}}\right)g(y)dy}, \quad x\in (0,\infty),$$
    is bounded from $L^2(0,\infty)$ into itself. Hence, the operators $D_1^{loc}$ and $D_2^{loc}$ are bounded in $L^2((0,T) \times (0,\infty),X)$.

    Finally, we consider the operator $D_3^{loc}$ as follows
    $$D_3^{loc}(f)(t,x)
        =\int_0^t{\int_{x/2}^{2x}{\partial_t\left[P_{t-s}^{S_\alpha  ,1,2}(x,y)-P_{t-s}(x-y)\right]f(s,y)dy}ds}, \quad t,x\in (0,\infty).$$
    As in \cite[p. 486]{BCFR1} we can write, for every $t,x,y \in (0,\infty)$,
    $$P_t^{S_\alpha,1,2}(x,y)
        =P_t(x-y)-\frac{2\alpha (xy)^\alpha}{\pi}\int_{\pi / 2}^{\infty}{{{t{\theta}^{2\alpha -1}}\over{\left[|x-y|^2+t^2+xy\theta^2\right]^{\alpha +1}}}d\theta}.$$
    Then, we get
    $$D_3^{loc}(f)(t,x)
        =\int_0^t{\int_{x/2}^{2x}{\partial_tP_{t-s}^{S_\alpha  ,1,3}(x,y)f(s,y)dy}ds}, \quad t,x\in (0,\infty),$$
    where
    $$P_{t}^{S_\alpha  ,1,3}(x,y)
        =-{{2\alpha(xy)^{\alpha}}\over{\pi}}\int_{\pi / 2}^{\infty}{{{{t\theta}^{2\alpha -1}}\over{\left[|x-y|^2+t^2+xy\theta^2\right]^{\alpha +1}}}d\theta}, \quad t,x,y\in (0,\infty).$$

    We have that
    \begin{align*}
        & \left|{\partial}_tP_{t-s}^{S_\alpha  ,1,3}(x,y)\right|\leq C\int_{\pi / 2}^{\infty}{{{(xy)^{\alpha}{\theta}^{2\alpha -1}}\over{\left[|x-y|^2+|t-s|^2+xy\theta^2\right]^{\alpha +1}}}d\theta} \\
        & \qquad \leq C(xy)^{\alpha}\left[{1\over{\left[|x-y|^2+|t-s|^2+xy\pi^2/4\right]^{\alpha +1}}}+\int_{\pi^2/4}^{\infty}{{{xy{u}^{\alpha}}\over{\left[|x-y|^2+|t-s|^2+xyu\right]^{\alpha +2}}}du}\right] \\
        & \qquad \leq C(xy)^{\alpha}\left[{1\over{\left[|t-s|^2+xy\right]^{\alpha +1}}}+(xy)^{-\alpha +1}\int_{\pi^2/4}^{\infty}{{1\over{\left[|t-s|^2+xyu\right]^2}}du}\right] \\
        & \qquad \leq C{1\over{|t-s|^2+x^2}}, \quad s,t,x \in (0,\infty) \mbox{ and } {x/2}<y<2x.
    \end{align*}

    We can write
    \begin{align*}
        \|D_3^{loc}(f)(t,x)\|_X
            & \leq C\int_0^t{\int_{x/2}^{2x}{{{\|f(s,y)\|_X}\over{|t-s|^2+x^2}}dy}ds}
            \leq C{1\over x}\int_{x/2}^{2x}{P_*(\|\tilde f_0(\cdot,y)\|_X)(t)dy}, \ t,x\in(0,\infty).
    \end{align*}

    Hence, $D_3^{loc}$ is bounded from $L^2((0,T) \times (0,\infty),X)$ into itself. We conclude that $D^{loc}$ is bounded in $L^2((0,T) \times (0,\infty),X)$.
\end{proof}

\subsection{Bessel operator $\Delta_\alpha$}

We just have to use Lemma~\ref{LemTransfer}, taking $\overline{\mathcal{L}}=\Delta_\alpha$, $\mathcal{L}=S_\alpha$, $M(x)=x^\alpha$
and $h(x)=x$, so
$$\Delta_\alpha = (U \circ W)^{-1} \circ S_\alpha \circ (U \circ W).$$

\section{Proof of Theorem~\ref{Th1} for Hermite type operators}\label{sec:Hermite}

\subsection{Hermite operator $H$}

The Hermite operator on $\R$ is defined by $\displaystyle H=-{1\over 2}({{d^2}\over{dx^2}}-x^2)$. We have that, for every $k\in\N$,
$$Hh_k=\left(k+{1\over 2}\right)h_k,$$
where $h_k$ denotes the $k$-th Hermite function given by
$$h_k(x)=(\sqrt{k}2^kk!)^{-{1/ 2}}H_k(x)e^{-{{x^2}/ 2}}, \quad x\in\R,$$
and $H_k$ represents the $k$-th Hermite polynomial \cite[pp. 105--106]{Sz}.
The operator $-H$ gene\-rates the positive semigroup of contractions
$\{W_t^H\}_{t>0}$ in $L^p(\R)$, being, for every $f\in L^p(\R)$, $1~\leq~p~\leq~\infty$,
$$W_t^H(f)(x)=\int_\R{W_t^H(x,y)f(y)dy}, \quad x\in\R \mbox{ and } t>0,$$
and for each $x,y \in \R$, $t>0$,
$$W_t^H(x,y)
    =\left({{e^{-t}}\over{\pi(1-e^{-2t})}}\right)^{1/ 2}
        \exp \left[-{1\over 2}{{1+e^{-2t}}\over{1-e^{-2t}}}(x^2+y^2)+{{2e^{-t}}\over{1-e^{-2t}}}xy\right].$$

The Poisson semigroup $\{P_t^H\}_{t>0}$ associated with the Hermite operator is defined, for every $f\in L^p(\R)$, $1\leq p\leq\infty$,  as
$$P_t^H(f)(x)
    =\int_\R{P_t^H(x,y)f(y)dy},\quad x\in\R \mbox{ and } t>0,$$
where, by using the subordination formula we can write
$$P_t^H(x,y)
    =\frac{t}{\sqrt{4\pi}}\int_0^{\infty} \frac{e^{-t^2/4u}}{u^{3/2}} W_u^H(x,y)du,\quad x,y\in\R \mbox{ and } t>0.$$

Muckenhoupt studied in \cite{Mu2} and \cite{Mu1} the harmonic analysis for the Hermite operator in dimension one.
In the last years, Thangavelu \cite{Th}, Stempak and Torrea \cite{ST1} and \cite{ST2} have investigated it in higher dimensions.
Banach valued harmonic analysis operators in the Hermite context have been studied in \cite{AT} and \cite{BFRST1}.

We consider the operator defined by
$$K\left(\{P_u^H\}_{u>0}\right)(f)(t,x)
    = \int_0^t \partial_t P_{t-s}^H(f(s,\cdot))(x)ds, \quad x \in \R \text{ and } t>0, $$
for every $f \in C^\infty_c(0,T) \otimes C_c^\infty(\R)\otimes X$.

We denote by $\rho$ the following function
$$ \rho(x)=\left\{
        \begin{array}{ll}
            1/2, &|x| \leq 1,\\
            \dfrac{1}{1+|x|}, & |x| > 1.
        \end{array}\right. $$
This function $\rho$ plays an important role in the study of harmonic analysis associated with the Hermite operator
(see \cite{BCFST} and \cite{DZ1}).

We split the operator $K\left(\{P_u^H\}_{u>0}\right)$ as follows
$$K\left(\{P_u^H\}_{u>0}\right)
    = K^{loc}\left(\{P_u^H\}_{u>0}\right) + K^{glob}\left(\{P_u^H\}_{u>0}\right),$$
where
$$K^{loc}\left(\{P_u^H\}_{u>0}\right)(f)(t,x)
    = \int_0^t \int_{|x-y|<\rho(x)} \partial_t P_{t-s}^H(x,y) f(s,y) dy ds, \quad x \in \R \text{ and } t>0,$$
for every $f \in C^\infty_c(0,T) \otimes C_c^\infty(\R)\otimes X$.

In a similar way we decompose the operator
$K\left(\{P_u\}_{u>0}\right)$ on $C^\infty_c(0,T) \otimes C_c^\infty(\R)\otimes X$ by
$$K\left(\{P_u\}_{u>0}\right)
    = K^{loc}\left(\{P_u\}_{u>0}\right) + K^{glob}\left(\{P_u\}_{u>0}\right).$$

In a first step, we are going to see that the operator
$K^{glob}\left(\{P_u^H\}_{u>0}\right)$ is bounded in
$L^2((0,T)\times \R, X)$.

\begin{Lem}\label{Lem3.1}
    Let $X$ be a Banach space and $T>0$. Then, the operator $K^{glob}\left(\{P_u^H\}_{u>0}\right)$ is bounded from
    $L^2((0,T)\times \R, X)$ into itself.
\end{Lem}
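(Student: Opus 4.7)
The plan is to imitate the Bessel argument of Lemma~\ref{Lem2.2}: derive a pointwise estimate for the kernel $\partial_t P_{t-s}^H(x,y)$ in the global region $|x-y|\geq \rho(x)$, and then dominate the $X$-norm of $K^{glob}(\{P_u^H\}_{u>0})f(t,x)$ by a composition of the Poisson maximal operator $P_*$ in the $t$-variable with a Hardy-type operator in the $x$-variable, both of which are bounded on $L^2$. Differentiating the subordination formula yields
$$
\partial_t P_t^H(x,y) = \frac{1}{\sqrt{4\pi}}\int_0^\infty \frac{e^{-t^2/(4u)}}{u^{3/2}}\left(1-\frac{t^2}{2u}\right) W_u^H(x,y)\,du,
$$
and I would split this integral at $u=1$. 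For $u\geq 1$, the prefactor $(e^{-u}/(1-e^{-2u}))^{1/2}$ gives exponential decay in $u$, and together with the additional Gaussian factor $e^{-c(x^2+y^2)}$ in the Mehler kernel produces a contribution bounded by $Ce^{-c(x^2+y^2)}$, which is $L^2$-bounded by Schur's test. For $u<1$, using the identity
$$
(x-e^{-u}y)^2+(y-e^{-u}x)^2 = (1+e^{-2u})(x^2+y^2) - 4e^{-u}xy \geq c\bigl((x-y)^2 + u^2(x+y)^2\bigr),
$$
the Mehler kernel is dominated by the classical Gaussian heat kernel multiplied by $e^{-cu(x+y)^2}$; substituting into the subordination integral yields a bound $|\partial_t P_{t-s}^H(x,y)|\leq C/(|t-s|^2+|x-y|^2)$ together with an auxiliary exponential decay factor in $|x+y|$ needed to handle the region where $|x|$ is large.

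With these kernel estimates, I would follow the reduction used in the proof of Lemma~\ref{Lem2.1}. Extending $f$ in time by zero outside $(0,T)$, the $t$-integral is controlled pointwise by $C|x-y|^{-1} P_*(\|\widetilde f(\cdot,y)\|_X)(t)$, reducing the problem to the $L^2(\R)$-boundedness of
$$
T(g)(x) = \int_{|x-y|\geq\rho(x)} \frac{g(y)}{|x-y|}\,dy.
$$
Splitting the domain of integration into $\{\rho(x)\leq|x-y|\leq 2|x|\}$ and $\{|x-y|>2|x|\}$, the second piece reduces to the Hardy operator $H_\infty$ already used in Lemma~\ref{Lem2.1} and is therefore bounded on $L^2(\R)$, while for the first piece one exploits the auxiliary exponential factor $e^{-c|x+y|\rho(x)}$ obtained from the small-$u$ estimate, which tames the logarithmic divergence $\log(|x|/\rho(x))$ appearing in the unweighted estimate.

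The main obstacle will be the transition regime where both $|x|$ and $|y|$ are large and $|x-y|$ is comparable to $\rho(x)\sim 1/|x|$. There the naive size bound $1/|x-y|^2$ is only borderline integrable when truncated at the level of $\rho(x)$, so the proof must genuinely use the additional Gaussian decay $e^{-cu(x+y)^2}$ produced by the Mehler kernel — more concretely, the contribution of the second quadratic form $(y-e^{-u}x)^2$ inside the exponent of $W_u^H$. Tracking this factor through the subordination integral and converting it into a $y$-dependent exponential suitable for the Hardy-type estimates is where the technical effort will concentrate.
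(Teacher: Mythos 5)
Your argument is correct in outline but follows a genuinely different route from the paper's. The paper splits the subordination integral at $u=\rho(x)^2$ rather than at $u=1$: for $u<\rho(x)^2$ it only needs the crude bound $W_u^H(x,y)\leq Ce^{-cu}W_{cu}(x-y)$, since $|x-y|\geq\rho(x)$ and $u<\rho(x)^2$ already produce a spare factor $e^{-c\rho(x)^2/u}$, and the piece is dominated by $W_*[W_*(|h_0|)(t)](x)$; for $u\geq\rho(x)^2$ it proves the refined estimate $W_u^H(x,y)\leq C\,u^{-1/2}e^{-c|x-y|^2/u}(\rho(x)^2/u)^2$, integrates to get the kernel $\rho(x)^{-1}(\rho(x)/|x-y|)^3$ on $\{|x-y|\geq\rho(x)\}$, and sums dyadically to land on $\M[W_*(|h_0|)(t)](x)$. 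You instead keep the $\tanh$-part of the Mehler exponent, $e^{-cu(x+y)^2}$ for $u<1$, and trade it via the arithmetic--geometric mean inequality for $e^{-c|x-y||x+y|}$, finishing with Hardy operators and a Schur test composed with $P_*$; your $u\geq1$ piece is even simpler than anything in the paper (a plain $e^{-c(x^2+y^2)}$ kernel, fine for $T<\infty$). Both proofs exploit the same phenomenon --- extra decay of $W_u^H(x,y)$ once $u\gtrsim\rho(x)^2$ and $|x|$ is large --- the paper converting it into the polynomial gain $(\rho(x)^2/u)^2$ and you into an exponential off-diagonal weight. The one step you defer, the annulus $\rho(x)\leq|x-y|\leq 2|x|$, does go through: with $r=|x-y|$ and $|x+y|\geq|x|$ on the inner half of the annulus,
\begin{equation*}
\int_{\rho(x)}^{|x|}e^{-cr|x|}\,\frac{dr}{r}\;=\;\int_{c\rho(x)|x|}^{c|x|^2}e^{-v}\,\frac{dv}{v}\;\leq\;\int_{c'}^{\infty}e^{-v}\,\frac{dv}{v}\;\leq\;C,
\end{equation*}
because $\rho(x)|x|$ is bounded below, and the outer half contributes at most $C$ since $1/r\leq1/|x|$ there; the symmetric column estimate gives the Schur test. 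So your plan closes, at the cost of this weighted Schur computation; the paper's dyadic reduction to the Hardy--Littlewood maximal function is arguably the cleaner way to package the same decay.
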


\begin{proof}
    Let $f \in C^\infty_c(0,T) \otimes C_c^\infty(\R)\otimes X$. We have that
    \begin{align*}
        \left| \partial_t P_{t-s}^H(x,y)  \right|
            \leq & C \int_0^\infty \frac{e^{-|t-s|^2/4u}}{u^{3/2}} \left( 1 + \frac{|t-s|^2}{u} \right) W_u^H(x,y) du \\
            \leq & C \int_0^\infty \frac{e^{-c|t-s|^2/u}}{u^{3/2}} W_u^H(x,y) du, \quad x,y \in \R \text{ and } s,t \in (0,\infty).
    \end{align*}
    Then,
    \begin{align}\label{3.1}
        & \left\| K^{glob}\left(\{P_u^H\}_{u>0}\right)(f)(t,x) \right\|_X \nonumber \\
        & \qquad \qquad \leq C \int_0^t \int_{|x-y| \geq \rho(x)} \left( \int_0^{\rho(x)^2} + \int_{\rho(x)^2}^\infty \right) \frac{e^{-c|t-s|^2/u}}{u^{3/2}} W_u^H(x,y) du
                                    \|f(s,y)\|_X dy ds \nonumber \\
        & \qquad \qquad = C \left[ K^{glob}_{H,1}(\|f\|_X)(t,x) + K^{glob}_{H,2}(\|f\|_X)(t,x)\right], \quad x \in \R \text{ and } t \in (0,\infty).
    \end{align}
    Let $h \in L^2((0,T)\times \R)$. We detone by $h_0$ the following extension of $h$
    $$h_0(t,x)
        = \left\{
            \begin{array}{ll}
                h(t,x), & t \in (0,T),  \\
                0, & t \notin (0,T),
            \end{array} \right.$$
    for every $x \in \R$.
    Since
    \begin{equation}\label{10.1}
        W_t^H(x,y)
            \leq C e^{-ct} W_{ct}(x-y), \quad x,y \in \R \text{ and } t>0,
    \end{equation}
    where $W_t(z)=e^{-|z|^2/4t}/\sqrt{4\pi t}$, $z \in \R$ and $t>0$, represents the classical heat semigroup, we can write
    \begin{align*}
        \left| K^{glob}_{H,1}(h)(t,x) \right|
            \leq & C \int_0^t \int_{|x-y| \geq \rho(x)} \int_0^{\rho(x)^2} \frac{e^{-c|t-s|^2/u}}{u^{3/2}} \frac{e^{-c|x-y|^2/u}}{u^{1/2}}  |h(s,y)|du dy ds \\
            \leq & C \int_{|x-y| \geq \rho(x)} \int_0^{\rho(x)^2}  \frac{e^{-c|x-y|^2/u}}{u^{3/2}} W_*(|h_0(\cdot,y)|)(t) du dy ,
            \quad x \in \R \text{ and } t >0,
    \end{align*}
    where $W_*(g)=\sup_{u>0} |W_u(g)|$. We get
    \begin{align*}
    & \left| K^{glob}_{H,1}(h)(t,x) \right|
            \leq  C  \int_0^{\rho(x)^2} \int_{|x-y| \geq \rho(x)}  \frac{e^{-c \rho(x)^2/u}}{u} \frac{e^{-c |x-y|^2/u}}{u^{1/2}}  W_*(|h_0(\cdot,y)|)(t) dy du\\
    & \qquad \leq  C  \left(\int_0^{\rho(x)^2} \frac{du}{\rho(x)^2} \right) W_*\left[W_*(|h_0|)(t)\right](x)
            =  C  W_*\left[W_*(|h_0|)(t)\right](x), \quad x \in \R \text{ and } t >0.
    \end{align*}
    Since the maximal operator $W_*$ is bounded in $L^2(\R)$ we deduce that the operator $K^{glob}_{H,1}$ is bounded from
    $L^2((0,T)\times \R)$ into itself.

    In order to show that $K^{glob}_{H,2}$ is bounded in $L^2((0,T)\times \R)$ we will use the following estimate
    \begin{equation}\label{3.2}
        W_t^H(x,y)
            \leq C \frac{e^{-c|x-y|^2/t}}{\sqrt{t}} \left( \frac{\rho(x)^2}{t} \right)^2, \quad x,y \in \R \text{ and } t>0.
    \end{equation}
    Indeed, we have that
    $$|x-y|^2+|x+y|^2 \geq x^2 \geq \frac{C}{\rho(x)^2}, \quad |x| > 1.$$
    Then, since $\rho(x)=1/2$, $|x| \leq 1$,
    \begin{align*}
        W_t^H(x,y)
            \leq C e^{-c(t+|x+y|^2+|x-y|^2)}
            \leq C \frac{e^{-c|x-y|^2/t}}{\sqrt{t}} \left( \frac{\rho(x)^2}{t} \right)^2, \quad x,y \in \R \text{ and } t>1.
    \end{align*}
    Suppose now that $0<t \leq 1$. If $|x| \leq 1$ and $y \in \R$,
    \begin{align*}
        W_t^H(x,y)
            \leq C \frac{e^{-c(t+|x-y|^2/t)}}{\sqrt{t}}
            \leq C \frac{e^{-c|x-y|^2/t}}{\sqrt{t}} \left( \frac{\rho(x)^2}{t} \right)^2.
    \end{align*}
    Moreover, if $|x|>1$ and $y \in \R$,
    \begin{align*}
        W_t^H(x,y)
            \leq & C \frac{e^{-c(t+|x-y|^2/t+t|x+y|^2)}}{\sqrt{t}}
            \leq C \frac{e^{-c|x-y|^2/t}e^{-ctx^2}}{\sqrt{t}}
            \leq C \frac{e^{-c |x-y|^2/t} e^{-ct/\rho(x)^2} }{\sqrt{t}} \\
            \leq & C \frac{e^{-c|x-y|^2/t}}{\sqrt{t}} \left( \frac{\rho(x)^2}{t} \right)^2, \quad xy \geq 0,
    \end{align*}
    and
    \begin{align*}
        W_t^H(x,y)
            \leq & C \frac{e^{-c(t+|x-y|^2/t)}}{\sqrt{t}}
            \leq C \frac{e^{-ct} e^{-c|x-y|^2/t}e^{-cx^2/t}}{\sqrt{t}}
            \leq C \frac{e^{-c|x-y|^2/t}}{\sqrt{t}} \left( \frac{1}{tx^2} \right)^2 \\
            \leq & C \frac{e^{-c|x-y|^2/t}}{\sqrt{t}} \left( \frac{\rho(x)^2}{t} \right)^2, \quad xy \leq  0.
    \end{align*}
    Thus, \eqref{3.2} is established.

    We can write
    \begin{align*}
        \left| K^{glob}_{H,2}(h)(t,x) \right|
            \leq & C \int_0^t \int_{|x-y| \geq \rho(x)} \int_{\rho(x)^2}^\infty
                        \frac{e^{-c|t-s|^2/u}}{u^{1/2}} \frac{e^{-c|x-y|^2/u}}{u^{3/2}} \frac{\rho(x)^4}{u^2}  |h(s,y)| dudy ds   \\
            \leq & C \int_{|x-y| \geq \rho(x)} \left(\int_{\rho(x)^2}^\infty \frac{\rho(x)^4}{u^{7/2}}e^{-c|x-y|^2/u}  du \right) W_*(|h_0(\cdot,y)|)(t) dy \\
            \leq & C \int_{|x-y| \geq \rho(x)} \left( \rho(x)^6 \int_{1}^\infty \frac{e^{-c|x-y|^2/v\rho(x)^2}}{(v\rho(x)^2)^{7/2}}  dv\right) W_*(|h_0(\cdot,y)|)(t) dy \\
            \leq & C \int_{|x-y| \geq \rho(x)} \left( \frac{1}{\rho(x)} \int_{1}^\infty  \left( \frac{\rho(x)}{|x-y|} \right)^3 \frac{dv}{v^2} \right) W_*(|h_0(\cdot,y)|)(t) dy \\
            \leq & C \frac{1}{\rho(x)} \int_{|x-y| \geq \rho(x)} \left( \frac{\rho(x)}{|x-y|} \right)^3 W_*(|h_0(\cdot,y)|)(t) dy \\
            = & C \frac{1}{\rho(x)} \sum_{k=0}^\infty \int_{2^k \rho(x) \leq |x-y| < 2^{k+1}\rho(x)}\left( \frac{\rho(x)}{|x-y|} \right)^3 W_*(|h_0(\cdot,y)|)(t) dy \\
            \leq & C \sum_{k=0}^\infty \frac{1}{2^{3k}\rho(x)}  \int_{|x-y| < 2^{k+1}\rho(x)}W_*(|h_0(\cdot,y)|)(t) dy \\
            \leq & C \M\left[ W_*(|h_0|)(t) \right](x), \quad x \in \R \text{ and } t >0.
    \end{align*}
    Here $\M$ represents the Hardy-Littlewood maximal function. Since $W_*$ and $\M$ are bounded operators in
    $L^2(\R)$, it follows that the operator $K^{glob}_{H,2}$ is bounded from $L^2((0,T)\times \R)$
    into itself.

    From \eqref{3.1} we conclude that $K^{glob}(\{P^H_u\}_{u>0})$ is bounded from $L^2((0,T)\times \R,X)$ into itself.
\end{proof}

In the second step we show that the operator $\mathcal{D}^{loc}$ defined by
$$\mathcal{D}^{loc}
    = K^{loc}(\{P^H_u\}_{u>0}) - K^{loc}(\{P_u\}_{u>0})$$
is bounded in $L^2((0,T)\times \R,X)$.

\begin{Lem}\label{Lem3.2}
    Let $X$ be a Banach space and $T>0$. Then, the operator $\mathcal{D}^{loc}$ is bounded from
    $L^2((0,T)\times~\R,~X)$ into itself.
\end{Lem}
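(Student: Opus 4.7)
The approach is analogous to Lemma~\ref{Lem2.3}: I would decompose the Hermite Poisson kernel in the local region $|x-y|<\rho(x)$ as the classical Poisson kernel plus an error, and control the error pointwise by operators already known to be bounded on $L^2(\R)$. The key technical ingredients are the subordination formula and an explicit analysis of the Mehler kernel.

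Applying subordination to both semigroups, I would write
$$\partial_t \left[ P^H_{t-s}(x,y) - P_{t-s}(x-y) \right] = \int_0^\infty \partial_t\!\left[\frac{(t-s)\,e^{-(t-s)^2/4u}}{\sqrt{4\pi}\,u^{3/2}}\right] \left[W^H_u(x,y) - W_u(x-y)\right] du,$$
reducing the problem to a comparison of heat kernels. The heart of the matter is to Taylor-expand the Mehler representation
$$W^H_u(x,y)=\left(\frac{e^{-u}}{\pi(1-e^{-2u})}\right)^{1/2} \exp\!\left[-\frac{1-e^{-u}}{4(1+e^{-u})}(x+y)^2 -\frac{1+e^{-u}}{4(1-e^{-u})}(x-y)^2\right]$$
around $u=0$, isolating the leading Gaussian factor to produce, for $|x-y|<\rho(x)$ and $0<u\leq \rho(x)^2$, an estimate of the form
$$\left| W^H_u(x,y) - W_u(x-y)\right| \leq C \frac{e^{-c|x-y|^2/u}}{\sqrt{u}} \left[ u(x+y)^2 + u \right] \leq C \frac{e^{-c|x-y|^2/u}}{\sqrt{u}} \cdot \frac{u}{\rho(x)^2},$$
where I use that $|x+y|\leq 2|x|+|x-y|\lesssim 1/\rho(x)$ whenever $|x-y|<\rho(x)$. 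For $u>\rho(x)^2$, each kernel would be bounded separately using the estimate~\eqref{3.2} together with the standard Gaussian upper bound for $W_u$, exploiting the extra decay factor $(\rho(x)^2/u)^2$.

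Inserting these bounds into the subordination integral and carrying out the $t$-derivative, the kernel of $\mathcal{D}^{loc}$ should be pointwise dominated by an expression that integrates to the classical heat maximal function $W_*$ composed with an average of radius $\rho(x)$ around $x$, leading to a bound of the form
$$\|\mathcal{D}^{loc}(f)(t,x)\|_X \leq \frac{C}{\rho(x)}\int_{|x-y|<\rho(x)} W_*(\|\tilde f_0(\cdot,y)\|_X)(t)\, dy \leq C\, \M\bigl[ W_*(\|\tilde f_0\|_X)(t)\bigr](x),$$
from which the $L^2((0,T)\times\R,X)$-boundedness of $\mathcal{D}^{loc}$ follows from the $L^2(\R)$-boundedness of both $W_*$ and $\M$. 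The main obstacle is the pointwise comparison between $W^H_u$ and $W_u$: the two generators differ both by the potential $\tfrac{1}{2}x^2$ and by a factor $\tfrac{1}{2}$ in the free part, so the Taylor expansion of the Mehler exponent must be combined with a careful rescaling of the classical heat-time to align the leading Gaussian factors (absorbing the discrepancy into the constant $c$), and the geometric restriction $|x-y|<\rho(x)$ is essential in order to convert the otherwise unbounded factor $(x+y)^2$ into $1/\rho(x)^2$, yielding the smallness needed to close the estimate.
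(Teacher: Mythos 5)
Your proposal reproduces the architecture of the paper's proof: split the subordination integral at $u=\rho(x)^2$, handle the range $u>\rho(x)^2$ crudely (the paper only needs \eqref{10.1} together with $\int_{\rho(x)^2}^\infty u^{-3/2}du\leq C/\rho(x)$, so invoking \eqref{3.2} there is more than is required), reduce the range $0<u<\rho(x)^2$ to a pointwise comparison of heat kernels, and close the estimate with $\M\bigl[W_*(\cdot)\bigr]$ and the $L^2(\R)$-boundedness of $\M$ and $W_*$. The only genuinely different ingredient is how you propose to prove the key comparison, which is exactly the paper's \eqref{3.3}: you Taylor-expand the Mehler kernel, whereas the paper uses the Kato--Trotter perturbation formula and the elementary bound $\int_\R e^{-c|x-z|^2/s}z^2dz\leq C\sqrt{s}/\rho(x)^2$. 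Either route is legitimate in principle.

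The gap is the step where you ``absorb the discrepancy into the constant $c$''. For one-sided upper bounds such as \eqref{10.1} this device is harmless; for a \emph{difference} of kernels it is fatal. Since $H=-\tfrac12(d^2/dx^2-x^2)$ while $W_u$ is generated by $d^2/dx^2$, the Mehler kernel satisfies $W_u^H(x,y)= W_{u/2}(x-y)\bigl(1+O(u)\bigr)$ as $u\to0^+$ (prefactor $(2\pi u)^{-1/2}$ versus $(4\pi u)^{-1/2}$, Gaussian exponent $-(x-y)^2/2u$ versus $-(x-y)^2/4u$), so on the diagonal $\bigl|W_u^H(x,x)-W_u(0)\bigr|\sim\bigl(\tfrac{1}{\sqrt{2}}-\tfrac12\bigr)(\pi u)^{-1/2}$, which is of order $u^{-1/2}$ and cannot be dominated by $C u^{1/2}/\rho(x)^2$. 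Your displayed estimate is therefore false as written, and no Taylor expansion will produce it; the leading Gaussian profiles simply do not cancel. (The paper's own derivation of \eqref{3.3} hides the same normalization slip in its use of the Kato--Trotter formula, which presupposes that the two generators differ exactly by the potential $z^2$.) The repair is to expand against $W_{u/2}$: the estimate $\bigl|W_u^H(x,y)-W_{u/2}(x-y)\bigr|\leq C u\,\rho(x)^{-2}W_{cu}(x-y)$ for $|x-y|<\rho(x)$ and $0<u<\rho(x)^2$ is correct, and your observation that $(x+y)^2\lesssim\rho(x)^{-2}$ in the local region does the rest. The cost is that subordination then produces $P_{(t-s)/\sqrt{2}}$ rather than $P_{t-s}$ in the comparison operator, so you must add the (easy) observation that $K^{loc}(\{P_{u/\sqrt{2}}\}_{u>0})$ and $K^{loc}(\{P_u\}_{u>0})$ are simultaneously bounded --- for instance via the dilation argument in the proof of Lemma~\ref{Lem3.3} and Theorem~\ref{ThA} applied to $\sqrt{2}\,\sqrt{\Delta}$. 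Without this rescaling the key pointwise estimate, and with it the whole argument, breaks down.
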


\begin{proof}
    Let $f \in C^\infty_c(0,T) \otimes C_c^\infty(\R)\otimes X$. The subordination formula and \eqref{10.1} allow us to write
    \begin{align*}
        \left\| \mathcal{D}^{loc}(f)(t,x) \right\|_X
            \leq &C \Big(\int_0^t \int_{|x-y| < \rho(x)} \int_0^{\rho(x)^2} \frac{e^{-c|t-s|^2/u}}{u^{3/2}}
                \left| W_u^H(x,y)-W_u(x-y) \right|  \|f(s,y)\|_X du dy ds  \\
        & + \int_0^t \int_{|x-y| < \rho(x)} \int_{\rho(x)^2}^\infty \frac{e^{-c|t-s|^2/u}}{u^{3/2}} W_{cu}(x-y)   \|f(s,y)\|_X du dy ds \Big)  \\
        = & C \left[ \mathcal{D}_1^{loc}(\|f\|_X)(t,x) + \mathcal{D}_2^{loc}(\|f\|_X)(t,x)\right], \quad x \in \R \text{ and } t \in (0,\infty).
    \end{align*}

    We now prove that $\mathcal{D}_j^{loc}$, $j=1,2$, is a bounded operator from $L^2((0,T)\times \R)$ into itself. Let
    $h \in L^2((0,T)\times \R)$. We have that
    \begin{align*}
        \left| \mathcal{D}^{loc}_{2}(h)(t,x) \right|
            \leq & C \int_{|x-y| < \rho(x)} \left(\int_{\rho(x)^2}^\infty \frac{du}{u^{3/2}} \right) W_*(|h_0(\cdot,y)|)(t) dy    \\
            \leq & \frac{C}{\rho(x)}  \int_{|x-y| < \rho(x)}W_*(|h_0(\cdot,y)|)(t) dy \\
            \leq & C \M\left[ W_*(|h_0|)(t) \right](x), \quad x \in \R \text{ and } t >0.
    \end{align*}
    Hence, $\mathcal{D}^{loc}_{2}$ is bounded in $L^2((0,T)\times \R)$.

    In order to analyze $\mathcal{D}^{loc}_{1}$ we need to show that
    \begin{equation}\label{3.3}
        \left| W_t^H(x,y)-W_t(x-y)\right|
            \leq C \frac{t}{\rho(x)^2} W_{ct}(x-y), \quad |x-y|<\rho(x) \text{ and } 0<t<\rho(x)^2.
    \end{equation}
    Indeed, by using Kato-Trotter's formula and \eqref{10.1} we get, for every $x,y \in \R$ and t>0,
    \begin{align*}
        & \left| W_t^H(x,y)-W_t(x-y)\right|
            =  \int_0^t \int_\R W_s(x-z)  W_{t-s}^H(z,y) z^2dz ds \\
        & \qquad \qquad    \leq  C \int_0^t \int_\R W_s(x-z)  W_{c(t-s)}(z-y) z^2dz ds \\
        & \qquad \qquad    \leq  C \left( \int_0^{t/2} \int_\R W_s(x-z)  W_{c(t-s)}(z-y) z^2dz ds + \int_0^{t/2} \int_\R W_{t-s}(x-z)  W_{cs}(z-y) z^2dz ds \right).
    \end{align*}
    On the other hand, we have that
    \begin{align*}
        \int_\R e^{-c|x-z|^2/s}z^2 dz
            = & \sqrt{s} \int_\R e^{-w^2} (x^2+sw^2-2xw\sqrt{s}) dw \\
            \leq & C \sqrt{s} \int_\R e^{-w^2} (x^2+w^2+xw) dw, \quad x \in \R, \ 0<s<1.
    \end{align*}
    If $|x|\leq 1$, then
    $$\int_\R e^{-w^2} (x^2+w^2+xw) dw
        \leq \int_\R e^{-w^2} (1+w^2+w) dw
        \leq C
        \leq \frac{C}{\rho(x)^2},$$
    and, when $|x| \geq 1$,
    \begin{align*}
        \int_\R e^{-w^2} (x^2+w^2+xw) dw
            \leq & \int_\R e^{-w^2} \left(\frac{1}{\rho(x)^2}+w^2+\frac{w}{\rho(x)}\right) dw\\
            \leq & \frac{C}{\rho(x)^2} \int_\R e^{-w^2} (1+w^2+w) dw
            \leq \frac{C}{\rho(x)^2},
    \end{align*}
    because $\rho(x)\leq 1$. Hence,
    $$\int_\R e^{-c|x-z|^2/s}z^2 dz
        \leq C \frac{\sqrt{s}}{\rho(x)^2}, \quad x \in \R \text{ and } 0<s<1.$$
    It follows that
    \begin{align*}
        & \left| W_t^H(x,y)-W_t(x-y)\right|
            \leq C \Big( \int_0^{t/2} \int_\R \frac{e^{-c[|x-z|^2/s+|z-y|^2/(t-s)]}}{s^{1/2}(t-s)^{1/2}} z^2 dz ds \\
        & \qquad \qquad  + \int_0^{t/2} \int_\R \frac{e^{-c[|x-z|^2/(t-s)+|z-y|^2/s]}}{s^{1/2}(t-s)^{1/2}} z^2 dz ds \Big)\\
        & \qquad \leq  C \int_0^{t/2} \int_\R \frac{e^{-c[|x-z|^2+|z-y|^2]/t}}{t^{1/2}}
                                              \left(\frac{e^{-c|x-z|^2/s}}{s^{1/2}} + \frac{e^{-c|y-z|^2/s}}{s^{1/2}} \right)  z^2 dz ds \\
        & \qquad \leq  C \frac{e^{-c|x-y|^2/t}}{t^{1/2}} \int_0^{t/2} \int_\R
                                              \left(\frac{e^{-c|x-z|^2/s}}{s^{1/2}} + \frac{e^{-c|y-z|^2/s}}{s^{1/2}} \right) z^2 dz ds \\
        & \qquad \leq  C \frac{t}{\rho(x)^2} W_{ct}(x-y), \quad |x-y|<\rho(x) \text{ and } 0<t<\rho(x)^2,
    \end{align*}
    because $\rho(x)/C \leq \rho(y) \leq C \rho(x)$, provided that $|x-y|<\rho(x)$.
    Thus \eqref{3.3} is proved.

    By using \eqref{3.3} we deduce that
    \begin{align*}
        \left| \mathcal{D}^{loc}_{1}(h)(t,x) \right|
            \leq & C \int_0^t \int_{|x-y| < \rho(x)} |h(s,y)| \int_0^{\rho(x)^2}
                            \frac{e^{-c|t-s|^2/u}}{u^{1/2}\rho(x)^2} \frac{e^{-c|x-y|^2/u}}{u^{1/2}} du dy ds \\
            \leq & C \int_{|x-y| < \rho(x)} \left(\int_0^{\rho(x)^2} \frac{du}{u^{1/2}\rho(x)^2} \right) W_*(|h_0(\cdot,y)|)(t) dy \\
            \leq & \frac{C}{\rho(x)}  \int_{|x-y| < \rho(x)}W_*(|h_0(\cdot,y)|)(t) dy \\
            \leq & C \M\left[ W_*(|h_0|)(t) \right](x), \quad x \in \R \text{ and } t >0.
    \end{align*}
    Hence, $\mathcal{D}^{loc}_{1}$ is bounded from $L^2((0,T)\times \R)$ into itself. We conclude that $\mathcal{D}^{loc}$ is bounded
    in $L^2((0,T)\times \R,X)$.
\end{proof}

Note that the operators $K^{glob}(\{P_u^H\}_{u>0})$ and
$\mathcal{D}^{loc}$ are bounded in $L^2((0,T)\times \R,X)$ for
every Banach space $X$.

By Lemmas~\ref{Lem3.1} and \ref{Lem3.2} the operator
$K(\{P_u^H\}_{u>0})$ is bounded in $L^2((0,T)\times \R,X)$ if, and
only if, the operator $K^{loc}(\{P_u\}_{u>0})$ is bounded in
$L^2((0,T)\times \R,X)$. According to Theorem~\ref{ThA} our
equivalence ($(a) \Leftrightarrow (b)$ in Theorem~\ref{Th1} for
the Hermite operator $H$) will be showed once we establish that
$K(\{P_u\}_{u>0})$ is bounded in $L^2((0,T)\times \R,X)$ if, and
only if, $K^{loc}(\{P_u\}_{u>0})$ is bounded in $L^2((0,T)\times
\R,X)$. In order to do this we use some ideas developed in
\cite{AST} and \cite{HTV}.

\begin{Lem}\label{Lem3.3}
    Let $X$ be a Banach space and $T>0$. Then, the operator $K(\{P_u\}_{u>0})$ is bounded in $L^2((0,T)\times \R, X)$ if, and only if,
    $K^{loc}(\{P_u\}_{u>0})$ is bounded in $L^2((0,T)\times \R,X)$.
\end{Lem}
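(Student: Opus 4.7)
The proof has the following structure. Since
$$K(\{P_u\}_{u>0})=K^{loc}(\{P_u\}_{u>0})+K^{glob}(\{P_u\}_{u>0}),$$
both implications of the statement follow once one shows that $K^{glob}(\{P_u\}_{u>0})$ is bounded on $L^2((0,T)\times\R,X)$ for \emph{every} Banach space $X$; one then simply reads the equivalence off the identity above. So the whole task reduces to the universal boundedness of the global piece.

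To bound $K^{glob}(\{P_u\}_{u>0})$ I would proceed by majorization. Since its kernel is scalar, the triangle inequality for the Bochner integral reduces the task to proving that the positive operator
$$T^{glob}(g)(t,x)\,=\,\int_0^t\int_{|x-y|\geq\rho(x)}|\partial_tP_{t-s}(x-y)|\,g(s,y)\,dy\,ds$$
is bounded on $L^2((0,T)\times\R)$. Using the pointwise bound $|\partial_tP_{t-s}(x-y)|\leq C/((t-s)^2+(x-y)^2)$ I would decompose the global region $\{|x-y|\geq\rho(x)\}$ into the dyadic annuli $A_k(x)=\{2^k\rho(x)\leq|x-y|<2^{k+1}\rho(x)\}$, $k\geq 0$. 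On each annulus, a careful split of the time integral into $|t-s|\leq 2^k\rho(x)$ and $|t-s|>2^k\rho(x)$ would allow me to control each piece via the Poisson maximal function $P_*$ in the time variable combined with the Hardy--Littlewood maximal operator $\mathcal{M}$ in the space variable (using the standard domination $\int_0^t g(s,y)\,ds/((t-s)^2+a^2)\leq (\pi/a)P_*(\tilde g_0(\cdot,y))(t)$, where $\tilde g_0$ is the zero-extension of $g$ off $(0,T)$), exactly in the spirit of the closing step for $K^{glob}_{H,2}$ in the proof of Lemma~\ref{Lem3.1}. Summing over $k$ and using the $L^2$-boundedness of $\mathcal{M}$ and $P_*$ would then finish the argument.

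The main technical obstacle is that, in contrast with the Hermite case (Lemma~\ref{Lem3.1}), the classical Poisson kernel carries no additional decay analogous to the factor $(\rho(x)^2/u)^2$ provided by~\eqref{3.2}. The convergence of the geometric series in $k$ therefore has to be extracted solely from the competing decays in the time and space variables inside $|\partial_tP_{t-s}(x-y)|\leq C/((t-s)^2+(x-y)^2)$, which must be distributed carefully between the two maximal operators in each annular subregion. This refined bookkeeping---which avoids the logarithmic loss that any naive Schur-type estimate produces, recalling that $\rho(x)=1/(1+|x|)$ degenerates as $|x|\to\infty$---is precisely where the ideas from \cite{AST} and \cite{HTV} enter.
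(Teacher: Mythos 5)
Your reduction of the lemma to the universal boundedness of $K^{glob}(\{P_u\}_{u>0})$ would indeed give both implications at once, but the way you propose to establish that boundedness cannot work, and this is a genuine gap rather than a bookkeeping issue. After the triangle inequality your task is to bound the positive operator with kernel $\chi_{\{0<s<t\}}\chi_{\{|x-y|\geq\rho(x)\}}\,|\partial_tP_{t-s}(x-y)|$ on $L^2((0,T)\times\R)$, and this operator is genuinely unbounded: since $|\partial_tP_{t-s}(z)|\geq c\,((t-s)^2+z^2)^{-1}$ on the region $|t-s|\leq|x-y|/2$, testing against $g=\chi_{(0,T)\times(0,N)}\geq 0$ gives, for $x\in(2,N)$ and $t$ bounded away from $0$,
\begin{equation*}
\int_0^t\int_{|x-y|\geq\rho(x)}\frac{c\,g(s,y)}{(t-s)^2+(x-y)^2}\,dy\,ds
\;\geq\; c'\int_{\rho(x)}^{1}\frac{dr}{r}\;=\;c'\log\frac{1}{\rho(x)}\;=\;c'\log(1+x),
\end{equation*}
so the output has $L^2$-norm of order $\sqrt{TN}\,\log N$ against $\|g\|_{L^2}=\sqrt{TN}$. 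No distribution of decay between $P_*$ and $\mathcal{M}$ can repair this, because the failure already occurs for a nonnegative input and a pointwise lower bound on the kernel; the logarithm is forced by the degeneration $\rho(x)\to 0$ combined with the absence, for the translation-invariant kernel, of any analogue of the extra factor $(\rho(x)^2/u)^2$ in \eqref{3.2} that saves the Hermite case in Lemma~\ref{Lem3.1}. Any boundedness $K^{glob}(\{P_u\}_{u>0})$ might enjoy would have to come from cancellation in the $s$-integral (note $\int_0^t\partial_tP_{t-s}(z)\,ds=P_t(z)$ for $z\neq 0$), which the passage to absolute values destroys, and you supply no argument exploiting it.

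The paper's proof avoids the global part entirely and the two implications are proved by different mechanisms. For the forward direction one covers $\R$ by critical balls $I_k=B(x_k,\rho(x_k))$ with bounded overlap, shows that the block operator $T(f)=\sum_k\chi_{I_k}K(\{P_u\}_{u>0})(\chi_{(C_0+1)I_k}f)$ is controlled by $K(\{P_u\}_{u>0})$, and bounds $T-K^{loc}(\{P_u\}_{u>0})$ directly: its kernel is supported in the annular region $\rho(x)\leq|x-y|\leq C_0(C_0+2)\rho(x)$, where the integral $\int|x-y|^{-1}dy$ is of order a constant (not a logarithm), so the crude majorization by $\mathcal{M}[P_*(\cdot)]$ does succeed there. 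For the converse one uses the translation and dilation invariance of the classical Poisson kernel: writing $f_R(t,x)=f(t,Rx)$ and $f^R(t,x)=f(Rt,x)$, for compactly supported $f$ and $R$ large enough the support of $(f_R)^R$ is swallowed by the local region $|y-x/R|<\rho(x/R)$, so $K(\{P_u\}_{u>0})(f)$ coincides on any fixed strip with a rescaling of $K^{loc}(\{P_u\}_{u>0})((f_R)^R)$. You should adopt this two-pronged strategy (or find a genuinely new argument for the signed global operator); the majorization route is closed.
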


\begin{proof}
    Suppose that $K(\{P_u\}_{u>0})$ is bounded from $L^2((0,T)\times \R,X)$ into itself.
    Let $f \in C^\infty_c(0,T) \otimes C_c^\infty(\R)\otimes X$. There exists $C_0>0$  such that
    $$\frac{1}{C_0} \rho(x)
        \leq \rho(y)
        \leq C_0 \rho(x), \quad |x-y| \leq \rho(x).$$
    According to \cite[Proposition 5]{DGMTZ} (see also \cite[Lemma 2.3]{DZ1}) we choose a sequence $(x_k)_{k=1}^\infty \subseteq \R$ such that
    \begin{itemize}
        \item[$(i)$] $\R = \overset{\infty}{\underset{k=1}{\bigcup}} B(x_k,\rho(x_k))$.
        \item[$(ii)$] For every $M>0$ there exists $m \in \mathbb{N}$ such that, for every $\ell \in \mathbb{N}$,
        $$\#\{k \in \mathbb{N} : B(x_k,M\rho(x_k)) \cap B(x_\ell,M\rho(x_\ell)) \neq \varnothing\} \leq m,$$
        where $\#A$ denotes the cardinal of the set $A \subseteq \mathbb{N}$.
    \end{itemize}

    To simplify notation we write $I_k=B(x_k,\rho(x_k))$ and $MI_k=B(x_k,M\rho(x_k))$, $M>0$, $k \in \mathbb{N}$.

    We define the operator $T$ as follows
    $$T(f)(t,x)
        = \sum_{k=1}^\infty \chi_{I_k}(x) \int_0^t \int_{(C_0+1)I_k} \partial_t P_{t-s}(x-y)f(s,y)dyds, \quad x \in \R \text{ and } t>0.$$
    We have that
    \begin{align*}
        \left\| T(f) \right\|^2_{L^2((0,T)\times \R,X)}
            \leq & C \sum_{k=1}^\infty \left\| K(\{P_u\}_{u>0})\left(\chi_{(C_0+1)I_k}(y)f(s,y)\right) \right\|^2_{L^2((0,T)\times \R,X)}\\
            \leq & C \sum_{k=1}^\infty \left\|\chi_{(C_0+1)I_k}(y)f(s,y) \right\|^2_{L^2((0,T)\times \R,X)}\\
            \leq & C \left\| f \right\|^2_{L^2((0,T)\times \R,X)}.
    \end{align*}
    To see that $K^{loc}(\{P_u\}_{u>0})$ is bounded in $L^2((0,T)\times \R,X)$ it is enough to show that the difference operator
    $\D=T-K^{loc}(\{P_u\}_{u>0})$ is bounded in $L^2((0,T)\times \R,X)$.

    If $x \in I_k$ and $|x-y|<\rho(x)$, then $y \in (C_0+1)I_k$. Also, if $x \in I_k$ and $y \in (C_0+1)I_k$, then
    $$|x-y|
        \leq \rho(x_k) + (C_0+1) \rho(x_k)
        \leq C_0(C_0+2)\rho(x). $$
    Hence, we have that
    $$\left\{ (x,y) \in \R^2 : |x-y|<\rho(x) \right\}
        \subseteq \bigcup_{k \in \mathbb{N}} \left\{ (x,y) \in \R^2 : x \in I_k, \ y \in (C_0+1)I_k \right\},$$
    and
    \begin{align*}
        & \bigcup_{k \in \mathbb{N}} \left\{ (x,y) \in \R^2 : x \in I_k, \ y \in (C_0+1)I_k \right\}
                \setminus \left\{ (x,y) \in \R^2 : |x-y|<\rho(x) \right\} \\
        & \qquad \qquad \subseteq \left\{ (x,y) \in \R^2 : \rho(x) \leq |x-y| \leq  C_0(C_0+2) \rho(x) \right\}.
    \end{align*}

    We can write
    \begin{align*}
        \D(f)(t,x)
            = & \int_0^t \int_\R  \left[ \sum_{k=1}^\infty \chi_{I_k}(x) \chi_{(C_0+1)I_k}(y) - \chi_{\left\{ (x,y) \in \R^2 : |x-y|<\rho(x) \right\}}(x,y) \right] \\
            & \times \partial_tP_{t-s}(x-y)f(s,y) dy ds, \quad x \in \R \text{ and } t>0.
    \end{align*}
    Hence, we get
    \begin{align*}
        \| \D(f)(t,x) \|_X
            \leq & C \int_0^t \int_{\rho(x) \leq |x-y| \leq  C_0(C_0+2) \rho(x)} \left|\partial_tP_{t-s}(x-y) \right| \|f(s,y)\|_X dy ds \\
            \leq & C \int_0^t \int_{\rho(x) \leq |x-y| \leq  C_0(C_0+2) \rho(x)} \frac{1}{|t-s|^2+|x-y|^2}\|f(s,y)\|_X dy ds \\
            \leq & C \int_0^t \int_{\rho(x) \leq |x-y| \leq  C_0(C_0+2) \rho(x)} \frac{1}{|t-s|^2+\rho(x)^2}\|f(s,y)\|_X dy ds \\
            \leq & \frac{C}{\rho(x)} \int_{|x-y| \leq  C_0(C_0+2) \rho(x)} P_*\left(\|f_0(\cdot,y)\|_X\right)(t) dy  \\
            \leq & C \M\left[P_*\left(\|f_0\|_X\right)(t)\right](x), \quad x \in \R \text{ and } t>0.
    \end{align*}
    We conclude that $\D$ is bounded from $L^2((0,T)\times \R,X)$ into itself. Then, $K^{loc}(\{P_u\}_{u>0})$
    is bounded in $L^2((0,T)\times \R,X)$.

    Assume now that $K^{loc}(\{P_u\}_{u>0})$ is a bounded operator from $L^2((0,T)\times \R,X)$ into itself.
    We are going to see that $K(\{P_u\}_{u>0})$ is bounded in $L^2((0,T)\times \R,X)$.
    Let $f \in C_c^\infty(0,T) \otimes C_c^\infty(\R) \otimes X$.

    For every $R>0$ we define $f_R(t,x)=f(t,Rx)$ and
    $f^R(t,x)=f(Rt,x)$, $x \in \R$ and $t>0$. Straightforward manipulations allows us to show that
    $$K(\{P_u\}_{u>0})(f)(t,x)
        = K(\{P_u\}_{u>0})((f_R)^R)\left(\frac{t}{R}, \frac{x}{R}\right), \quad x \in \R \text{ and } t,R >0.$$
    Suppose that $f(t,x)=0$, $|x|>a$, $t>0$, where $a>0$. Fix $b>0$. We have that, for every $R>0$,
    $$\rho\left(\frac{x}{R}\right)
        = \left\{\begin{array}{ll}
            1/2, & |x| \leq R, \\
            \dfrac{R}{R+|x|} > \dfrac{R}{R+b}, & |x|>R,
          \end{array}\right.
        \quad |x| \leq b,$$
    and
    $$\left| y - \frac{x}{R} \right|
        \leq \frac{a+b}{R}, \quad |y| \leq \frac{a}{R} \text{ and } |x| \leq b. $$
    Hence, there exists $R_0>0$ such that $|y-x/R|<\rho(x/R)$, $|x|\leq b$, $|y| \leq a/R$ and $R \geq R_0$.
    Hence,
    \begin{align*}
        K(\{P_u\}_{u>0})(f)(t,x)
            = & \frac{1}{\pi} \int_0^{t/R} \int_{-a/R}^{a/R}
                \frac{\left( t/R - s \right)^2 - \left( x/R - y \right)^2}{\left[ \left( t/R - s \right)^2 + \left( x/R - y \right)^2 \right]^2} (f_R)^R(s,y)dy ds \\
            &   = \frac{1}{\pi} \int_0^{t/R} \int_{|y-x/R| < \rho(x/R)}
                \frac{\left( t/R - s \right)^2 - \left( x/R - y \right)^2}{\left[ \left( t/R - s \right)^2 + \left( x/R - y \right)^2 \right]^2} (f_R)^R(s,y)dy ds \\
            = & K^{loc}(\{P_u\}_{u>0})((f_R)^R)\left(\frac{t}{R}, \frac{x}{R}\right), \quad |x|\leq b, \ t >0 \text{ and } R \geq R_0.
    \end{align*}
    Since $K^{loc}(\{P_u\}_{u>0})$ is bounded in $L^2((0,T)\times \R,X)$, it follows that
    \begin{align*}
        \int_0^T \int_{-b}^b \left\| K(\{P_u\}_{u>0})(f)(t,x) \right\|_X^2 dx dt
            = & \int_0^T \int_\R \left\| K^{loc}(\{P_u\}_{u>0})((f_{R_0})^{R_0})\left(\frac{t}{{R_0}}, \frac{x}{{R_0}}\right) \right\|_X^2 dx dt \\
            = & R_0^2 \int_0^T \int_\R \left\| K^{loc}(\{P_u\}_{u>0})((f_{R_0})^{R_0})\left(t, x \right) \right\|_X^2 dx dt \\
            \leq & C R_0^2 \int_0^T \int_\R \left\| f \left(R_0t, R_0x \right) \right\|_X^2 dx dt \\
            \leq & C \|f\|^2_{L^2((0,T)\times \R,X)}.
    \end{align*}
    Hence,
    $$\| K(\{P_u\}_{u>0})(f) \|^2_{L^2((0,T)\times \R,X)}
        \leq  C \|f\|^2_{L^2((0,T)\times \R,X)}.$$
\end{proof}

\subsection{Ornstein-Uhlenbeck operator $\mathcal{O}+1/2$}

We use again Lemma~\ref{LemTransfer}, with $\overline{\mathcal{L}}= \mathcal{O}+1/2$, $\mathcal{L}=H$, $M(x)= \pi^{-1/4}e^{-x^2/2}$
and $h(x)=x$ to get
$$\mathcal{O}+ \frac{1}{2} = (U \circ W)^{-1} \circ H \circ (U \circ W).$$

\section{Proof of Theorem~\ref{Th1} for Laguerre type operators}\label{sec:Laguerre}

\subsection{Laguerre operator $L_\alpha^\varphi$}

We consider the Laguerre differential operator $L_\alpha^\varphi$, $\alpha>0$, defined by
$$L_\alpha^\varphi={1\over 2}\left(-{{d^2}\over{dx^2}}+x^2+{{\alpha(\alpha-1)}\over{x^2}}\right), \quad x\in (0,\infty).$$

We have that
$$L_\alpha^\varphi\varphi_k^{\alpha}=\left(2k+\alpha +{1\over 2}\right)\varphi_k^{\alpha},$$
where
$$ \varphi_k^{\alpha}(x)
    =\left({{2\Gamma(k+1)}\over{\Gamma(k+\alpha + {1/2})}}\right)^{1/ 2}e^{-{{x^2}/ 2}}x^{\alpha }\ell_k^{\alpha-{1/ 2}}(x^2), \quad x \in (0,\infty),$$
and, for every $k\in\N$, $\ell_k^{\alpha}$ represents the $k$-th Laguerre polynomial of order $\alpha$ (\cite[pp. 100--102]{Sz}).

The operator $-L_\alpha^\varphi$ generates the positive bounded semigroup $\{W_t^{L_\alpha^\varphi}\}_{t>0}$, that is contractive when $\alpha >1$
(\cite[Theorem 4.1]{NoSt3}), in $L^p(0,\infty)$,
where, for every $f\in L^p (0,\infty)$, $1\leq p\leq\infty$,
$$W_t^{L_\alpha^\varphi}(f)(x)=\int_0^{\infty}{W_t^{L_\alpha^\varphi}(x,y)f(y)dy}, \quad x\in (0,\infty), \ t>0,$$
being, for every $t,x,y\in (0,\infty)$,
$$W_t^{L_\alpha^\varphi}(x,y)
    =  \frac{2 \sqrt{xy}e^{-t}}{1-e^{-2t}}
        I_{\alpha-{1/ 2}}\left({{2xye^{-t}}\over{1-e^{-2t}}}\right)
        \exp\left[-{1\over 2}{{1+e^{-2t}}\over{1-e^{-2t}}}(x^2+y^2)\right].$$

By using the subordination formula we can write the Poisson kernel $P_t^{L_\alpha^\varphi}(x,y)$, $t,x,y\in (0,\infty)$, associated with $L_\alpha^\varphi$ as follows
$$P_t^{L_\alpha^\varphi}(x,y)
    ={t\over{\sqrt{4\pi}}}\int_0^{\infty}{{{e^{-{{t^2}/{4u}}}}\over{u^{{3/ 2}}}}W_u^{L_\alpha^\varphi}(x,y)du}, \quad t,x,y\in(0,\infty).$$

The Poisson semigroup for the Laguerre operator $L_\alpha^\varphi$ is defined by
$$P_t^{L_\alpha^\varphi}(f)(x)=\int_0^{\infty}{P_t^{L_\alpha^\varphi}(x,y)f(y)dy}, \quad t,x\in(0,\infty),$$
for every $f\in L^p(0,\infty)$, $1\leq p\leq\infty$.

The starting point for the harmonic analysis in Laguerre context are the works of Muckenhoupt \cite{Mu1} and \cite{Mu3}.
In the last years we remark the results of Nowak and Stempak \cite{NoSt1} and \cite{NoSt2} in higher dimensions.
Laguerre harmonic analysis in a Banach valued setting has been developed in \cite{BFRST1} and \cite{BFRST2}.

We consider the operator
$$K\left(\{P_u^{L_\alpha^\varphi}\}_{u>0}\right)(f)(t,x)
    = \int_0^t \partial_t P_{t-s}^{L_\alpha^\varphi}(f(s,\cdot))(x)ds, \quad t,x \in (0,\infty), $$
for every $f \in C_c^\infty(0,T) \otimes C_c^\infty(0,\infty) \otimes X$.

Since we showed in Section~\ref{sec:Hermite}, $(a) \Leftrightarrow (b)$ in Theorem~\ref{Th1} for the Hermite operator $H$,
it is enough to prove that $K(\{P_u^{L_\alpha^\varphi}\}_{u>0})$ is bounded in $L^2((0,T) \times (0,\infty), X)$ if, and only if,
$K\left(\{P_u^{H}\}_{u>0}\right)$ is bounded in $L^2((0,T) \times \R, X)$.

We consider the operator
$$K_+\left(\{P_u^{H}\}_{u>0}\right)(f)(t,x)
    = \int_0^t \int_0^\infty \partial_t P_{t-s}^{H}(x,y) f(s,y) dy ds, \quad t,x \in (0,\infty), $$
for every $f \in C_c^\infty(0,T) \otimes C_c^\infty(0,\infty) \otimes X$.

As in Lemma~\ref{Lem2.1}, we reduce the boundedness of $K\left(\{P_u^{H}\}_{u>0}\right)$ in $L^2((0,T) \times \R, X)$
to study the operator $K_+\left(\{P_u^{H}\}_{u>0}\right)$ in $L^2((0,T) \times (0,\infty), X)$.

\begin{Lem}\label{Lem4.1}
    Let $X$ be a Banach space and $T>0$. Then, the operator $K\left(\{P_u^{H}\}_{u>0}\right)$ is bounded in $L^2((0,T)\times \R,X)$ if, and only if,
    $K_+\left(\{P_u^{H}\}_{u>0}\right)$ is bounded in $L^2((0,T) \times (0,\infty),X)$.
\end{Lem}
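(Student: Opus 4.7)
The plan is to follow essentially verbatim the argument of Lemma~\ref{Lem2.1}, with the classical Poisson kernel replaced by the Hermite Poisson kernel, and with the bound on $\partial_t P_{t-s}^H(x,-y)$ playing the role that the explicit inequality for $|\partial_t P_{t-s}(x+y)|$ played before.

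For the forward implication, take $f \in C_c^\infty(0,T) \otimes C_c^\infty(0,\infty) \otimes X$ and extend it by zero to obtain $f_0 \in C_c^\infty(0,T) \otimes C_c^\infty(\R) \otimes X$ as in \eqref{5.1}. Then $K_+(\{P_u^H\}_{u>0})(f)(t,x) = K(\{P_u^H\}_{u>0})(f_0)(t,x)$ for $t,x \in (0,\infty)$, and since the extension preserves the $L^2$-norm, boundedness of $K_+(\{P_u^H\}_{u>0})$ on $L^2((0,T)\times (0,\infty),X)$ follows immediately.

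For the reverse implication, take $f \in C_c^\infty(0,T) \otimes C_c^\infty(\R) \otimes X$, split the inner integral over $\R$ as an integral over $(0,\infty)$ plus an integral over $(-\infty,0)$, change variables $y \mapsto -y$ in the latter, and exploit the symmetry $P_t^H(-x,-y) = P_t^H(x,y)$ (which is apparent from the explicit formula for $W_t^H$ and passes to $P_t^H$ via subordination) to handle the case $x<0$. Writing $f_+(t,x)=f(t,x)$, $f_-(t,x)=f(t,-x)$ for $x \in (0,\infty)$, and introducing the auxiliary operator
$$
K_-^H(g)(t,x) = \int_0^t \int_0^\infty \partial_t P_{t-s}^H(x,-y)\, g(s,y)\, dy\, ds, \quad t,x \in (0,\infty),
$$
we obtain an inequality analogous to \eqref{D1} which reduces the claim to showing that $K_-^H$ is bounded on $L^2((0,T)\times(0,\infty),X)$ for every Banach space $X$.

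The main obstacle, and the only step where the Hermite nature of the semigroup really enters, is the pointwise estimate
$$
\left|\partial_t P_{t-s}^H(x,-y)\right| \leq \frac{C}{|t-s|^2 + (x+y)^2}, \quad s,t,x,y \in (0,\infty),
$$
which is the direct analogue of the bound used in the proof of Lemma~\ref{Lem2.1}. I would derive it by differentiating the subordination formula, writing $\partial_t(t u^{-3/2} e^{-t^2/4u}) = u^{-3/2}e^{-t^2/4u}(1 - t^2/(2u))$, absorbing the polynomial factor via $|1-t^2/(2u)|e^{-t^2/4u} \leq C e^{-ct^2/u}$, invoking the Gaussian bound \eqref{10.1} in the form $W_u^H(x,-y) \leq C e^{-cu} W_{cu}(x+y)$, and finally computing the resulting integral $\int_0^\infty u^{-2} e^{-c(t^2+(x+y)^2)/u}\, du$ by the change of variables $v = (t^2+(x+y)^2)/u$. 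Once this kernel estimate is secured, the rest of the argument is a routine transcription of the $K_-$ analysis in Lemma~\ref{Lem2.1}: split the $y$-integral at $y=x$, dominate by $x^{-1}\int_0^x P_*(\|\tilde g_0(\cdot,y)\|_X)(t)\,dy + \int_x^\infty y^{-1} P_*(\|\tilde g_0(\cdot,y)\|_X)(t)\,dy$ with the time-extension $\tilde g_0$ as in \eqref{ext}, and conclude by the $L^2$-boundedness of the Hardy operators $H_0$, $H_\infty$ and of the classical Poisson maximal operator $P_*$.
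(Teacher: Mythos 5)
Your proposal is correct and follows essentially the same route as the paper's own proof: zero-extension for the forward direction, the $f_\pm$ splitting with the auxiliary operator $K_-\left(\{P_u^{H}\}_{u>0}\right)$ for the reverse, and the kernel bound $\left|\partial_t P_{t-s}^{H}(x,-y)\right|\leq C\left(|t-s|^2+(x+y)^2\right)^{-1}$ obtained from the subordination formula together with \eqref{10.1}, concluded via the Hardy operators and $P_*$ exactly as in Lemma~\ref{Lem2.1}.
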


\begin{proof}
    Assume firstly that $K\left(\{P_u^{H}\}_{u>0}\right)$ is a bounded operator from $L^2((0,T)\times \R,X)$ into itself, and
    let $f \in C_c^\infty(0,T) \otimes C_c^\infty(0,\infty) \otimes X$. We define the extension $f_0$ of $f$ as in \eqref{5.1}.
    Then,
    $$K_+\left(\{P_u^{H}\}_{u>0}\right)(f)(t,x)
        = K\left(\{P_u^{H}\}_{u>0}\right)(f_0)(t,x), \quad t,x \in (0,\infty),$$
    and
    \begin{align*}
        \| K_+\left(\{P_u^{H}\}_{u>0}\right)(f) \|_{L^2((0,T) \times (0,\infty),X)}
            \leq & \| K\left(\{P_u^{H}\}_{u>0}\right)(f_0) \|_{L^2((0,T) \times \R, X)} \\
            \leq & C \| f \|_{L^2((0,T) \times (0,\infty),X)}.
    \end{align*}

    Suppose now that $K_+\left(\{P_u^{H}\}_{u>0}\right)$ is bounded from $ L^2((0,T) \times (0,\infty), X)$ into itself.
    Let $f \in C_c^\infty(0,T) \otimes C_c^\infty(0,\infty) \otimes X$. We define
    operator $K_-\left(\{P_u^{H}\}_{u>0}\right)$ through
    $$K_-\left(\{P_u^{H}\}_{u>0}\right)(f)(t,x)
        = \int_0^t \int_0^\infty \partial_t P_{t-s}^{H}(x,-y) f(s,y) dy ds, \quad t,x \in (0,\infty).$$
    We can write
    \begin{align*}
        & \int_0^T \int_\R \left\| K\left(\{P_u^{H}\}_{u>0}\right)(f)(t,x) \right\|_X^2 dx dt
            \leq 2 \Big( \int_0^T \int_0^\infty \left\| K_+\left(\{P_u^{H}\}_{u>0}\right)(f_+)(t,x) \right\|_X^2 dx dt \\
        & \qquad + \int_0^T \int_0^\infty \left\| K_-\left(\{P_u^{H}\}_{u>0}\right)(f_-)(t,x) \right\|_X^2 dx dt
                        + \int_0^T \int_0^\infty \left\| K_+\left(\{P_u^{H}\}_{u>0}\right)(f_-)(t,x) \right\|_X^2 dx dt \\
        & \qquad + \int_0^T \int_0^\infty \left\| K_-\left(\{P_u^{H}\}_{u>0}\right)(f_+)(t,x) \right\|_X^2 dx dt\Big),
    \end{align*}
    where $f_+$ and $f_-$ were defined in the proof of Lemma~\ref{Lem2.1}.
    In order to see that the operator $K\left(\{P_u^{H}\}_{u>0}\right)$ is bounded from $L^2((0,T) \times \R, X)$
    into itself, it is enough to prove that $K_-\left(\{P_u^{H}\}_{u>0}\right)$ is bounded in $L^2((0,T) \times (0,\infty), X)$.

    We have that
    $$ \left| \partial_t P_{t-s}^H(x,-y) \right|
        \leq C \int_0^\infty \frac{e^{-c[|t-s|^2+ (x+y)^2]/u}}{u^2} du
        \leq \frac{C}{|t-s|^2 + (x+y)^2}, \quad s,t,x,y \in (0,\infty). $$
    Then, by proceeding as in the proof of Lemma~\ref{Lem2.1} we conclude that $K_-\left(\{P_u^{H}\}_{u>0}\right)$
    is bounded from $L^2((0,T) \times (0,\infty), X)$ into itself.
\end{proof}

We now split the operator $K_+\left(\{P_u^{H}\}_{u>0}\right)$ as follows
$$K_+\left(\{P_u^{H}\}_{u>0}\right)
    = K_+^{loc}\left(\{P_u^{H}\}_{u>0}\right) + K_+^{glob}\left(\{P_u^{H}\}_{u>0}\right),$$
where
$$ K_+^{loc}\left(\{P_u^{H}\}_{u>0}\right)(f)(t,x)
    = \int_0^t \int_{x/2}^{2x} \partial_t P_{t-s}^H(x,y)f(s,y) dy ds, \quad t,x \in (0,\infty),$$
for every $f \in C_c^\infty(0,T) \otimes C_c^\infty(0,\infty) \otimes X$.

The operator $K(\{P_u^{L_\alpha^\varphi}\}_{u>0})$ is also split in the following way
$$K\left(\{P_u^{L_\alpha^\varphi}\}_{u>0}\right)
    = K^{loc}\left(\{P_u^{L_\alpha^\varphi}\}_{u>0}\right) + K^{glob}\left(\{P_u^{L_\alpha^\varphi}\}_{u>0}\right),$$
where
$$ K^{loc}\left(\{P_u^{L_\alpha^\varphi}\}_{u>0}\right)(f)(t,x)
    = \int_0^t \int_{x/2}^{2x} \partial_t P_{t-s}^{L_\alpha^\varphi}(x,y)f(s,y) dy ds, \quad t,x \in (0,\infty),$$
for every $f \in C_c^\infty(0,T) \otimes C_c^\infty(0,\infty) \otimes X$.

In the sequel we will use the following properties of the modified Bessel function $I_\nu$, that can be
encountered in \cite[pp. 136 and 123]{Leb}. The behavior of $I_\nu(z)$ close to zero is
\begin{equation}\label{4.1}
    I_\nu(z)
        \sim \frac{1}{2^\nu \Gamma(\nu+1)} z^\nu, \quad \text{as } z \to 0^+.
\end{equation}
Moreover, for every $n \in \mathbb{N}$,
\begin{equation}\label{4.2}
    \sqrt{z} I_\nu(z)
        = \frac{e^z}{\sqrt{2\pi}} \left[ \sum_{k=0}^n \frac{(-1)^k [\nu,k]}{(2z)^{k}} + \mathcal{O}\left( \frac{1}{z^{n+1}} \right) \right],
\end{equation}
where $[\nu,0]=1$ and
$$[\nu,k]
    = \frac{(4\nu^2-1)(4\nu^2-3^2)\cdots (4\nu^2-(2k-1)^2)}{2^{2k}\Gamma(k+1)}, \quad k=1,2, \dots.$$

\begin{Lem}\label{Lem4.2}
    Let $X$ be a Banach space and $T>0$. Then, the operators $K_+^{glob}(\{P_u^H\}_{u>0})$
    and $K^{glob}\left(\{P_u^{L_\alpha^\varphi}\}_{u>0}\right)$ are bounded in $L^2((0,T)\times (0,\infty),X)$.
\end{Lem}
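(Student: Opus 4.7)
The plan is to handle the two global operators separately, treating the Hermite case first (which is essentially a subordinated heat-kernel computation) and then reducing the Laguerre case to it via the asymptotic behavior of the modified Bessel function $I_{\alpha-1/2}$.

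For $K_+^{glob}(\{P_u^H\}_{u>0})$, I would start from the subordination formula together with \eqref{10.1} to get
$$\left|\partial_t P_{t-s}^H(x,y)\right|
    \leq C \int_0^\infty \frac{e^{-c|t-s|^2/u}}{u^{3/2}} W_{cu}(x-y)\,du
    \leq \frac{C}{|t-s|^2+|x-y|^2}, \quad s,t,x,y \in (0,\infty),$$
exactly as in the estimation of $\partial_t P_{t-s}^{S_\alpha}$ in Lemma~\ref{Lem2.2}. Since the global region $\{y : y<x/2 \text{ or } y>2x\}$ forces $|x-y|\geq x/2$, I would split the $y$-integral on $(0,x/2)$ and $(2x,\infty)$, bound the kernel respectively by $C/(|t-s|^2+x^2)$ and $C/(|t-s|^2+y^2)$, and then dominate the resulting expression by Poisson-maximal applied to $\tilde f_0$ composed with the Hardy operators $H_0$ and $H_\infty$, which are bounded on $L^2(0,\infty)$. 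This gives the boundedness of $K_+^{glob}(\{P_u^H\}_{u>0})$ in $L^2((0,T)\times(0,\infty),X)$.

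For $K^{glob}(\{P_u^{L_\alpha^\varphi}\}_{u>0})$, the goal is to establish the same pointwise estimate
$$\left|\partial_t P_{t-s}^{L_\alpha^\varphi}(x,y)\right|
    \leq \frac{C}{|t-s|^2+|x-y|^2}, \quad y \notin (x/2,2x),$$
after which the rest of the argument proceeds identically. To get this bound I would insert the explicit form of $W_u^{L_\alpha^\varphi}(x,y)$ into the subordination formula and analyze the factor $I_{\alpha-1/2}(z)$ with $z = 2xy e^{-u}/(1-e^{-2u})$ by splitting into $z \leq 1$ and $z \geq 1$. When $z \geq 1$, I would use \eqref{4.2} to replace $\sqrt{z}\,I_{\alpha-1/2}(z)$ by $e^z/\sqrt{2\pi}$ up to lower-order terms; combining the leading exponent $z$ with the Gaussian factor of $W_u^{L_\alpha^\varphi}$ produces exactly $-(x-y)^2/(2\tanh u) + O(\text{bounded})$, which is essentially the Hermite kernel $W_u^H(x,y)$ up to a slowly varying factor, and then \eqref{10.1} applies. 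When $z \leq 1$, I would use the small-argument asymptotic \eqref{4.1}, which yields a polynomial factor $(xy)^{\alpha-1/2}$ whose growth is tamed by the Gaussian, and a direct computation recovers the desired bound.

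The main obstacle is the second case: carrying out the large-argument replacement in \eqref{4.2} cleanly inside the $u$-integral of the subordination formula, while keeping the error terms and the slowly varying prefactor under control and showing that they do not spoil the uniform $1/(|t-s|^2+|x-y|^2)$ bound. Once this estimate is secured, the conclusion for $K^{glob}(\{P_u^{L_\alpha^\varphi}\}_{u>0})$ follows by the same Hardy-operator plus Poisson-maximal-operator scheme used in Lemma~\ref{Lem2.2} and in the first half of the present proof.
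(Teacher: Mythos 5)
Your proposal is correct and follows essentially the same route as the paper: subordination plus the Gaussian domination \eqref{10.1} gives $|\partial_t P^H_{t-s}(x,y)|\leq C(|t-s|^2+|x-y|^2)^{-1}$, and the asymptotics \eqref{4.1}--\eqref{4.2} give $0\leq W_u^{L_\alpha^\varphi}(x,y)\leq C u^{-1/2}e^{-c|x-y|^2/u}$, after which both global operators are controlled by $H_0$, $H_\infty$ and $P_*$. The ``main obstacle'' you flag is not really one: for $z\geq 1$ only the one-sided bound $\sqrt{z}\,I_{\alpha-1/2}(z)\leq Ce^{z}$ is needed, and since $x,y>0$ the cross term in the exponent of $W_u^{L_\alpha^\varphi}$ combines with $e^{z}$ to leave exactly $-c(x-y)^2/u$ plus a nonpositive remainder, so no error-term bookkeeping is required.
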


\begin{proof}
    Let $f \in C_c^\infty(0,T) \otimes C_c^\infty(0,\infty) \otimes X$.
    We have that
    $$ \left| \partial_t P_{t-s}^H(x,y) \right|
        \leq C \int_0^\infty \frac{e^{-c[|t-s|^2+ |x-y|^2]/u}}{u^2} du
        \leq \frac{C}{|t-s|^2 + |x-y|^2}, \quad s,t,x,y \in (0,\infty). $$
    Hence, we obtain
    \begin{align*}
        & \| K_+^{glob}(\{P_u^H\}_{u>0})(f)(t,x) \|_X
            \leq  C \Big( \int_0^t \int_0^{x/2} \frac{1}{|t-s|^2 + x^2} \|f(s,y)\|_X dy ds \\
        & \qquad \qquad +  \int_0^t \int_{2x}^\infty \frac{1}{|t-s|^2 + y^2} \|f(s,y)\|_X dy ds \Big) \\
        & \qquad    \leq  C \left( \frac{1}{x} \int_0^x P_*\left( \|\tilde{f}_0(\cdot,y)\|_X \right)(t)dy
                        +  \int_y^\infty \frac{1}{y} P_*\left( \|\tilde{f}_0(\cdot,y)\|_X \right)(t)dy \right), \quad  t,x \in (0,\infty).
    \end{align*}
    Since the Hardy operators $H_0$ and $H_\infty$ (see Section~\ref{sec:Bessel}) are bounded in $L^2(0,\infty)$ and the maximal operator $P_*$
    is bounded in $L^2(\R)$, we deduce that $K_+^{glob}(\{P_u^H\}_{u>0})$ is bounded in $L^2((0,T) \times (0,\infty), X)$.

    According to \eqref{4.1} and \eqref{4.2} we get
    \begin{equation*}\label{18.1}
        0 \leq W_t^{L_\alpha^\varphi}(x,y)
            \leq C \frac{e^{-c|x-y|^2/t}}{\sqrt{t}}, \quad t,x,y \in (0,\infty).
    \end{equation*}
    By proceeding as above we obtain, for every $t,x \in (0,\infty)$,
    \begin{align*}
        & \| K^{glob}(\{P_u^{L_\alpha^\varphi}\}_{u>0})(f)(t,x) \|_X
        \leq  C \left( H_0\left[P_*\left( \|\tilde{f}_0(\cdot,y)\|_X \right)(t)\right](x)
    +                  H_\infty\left[P_*\left( \|\tilde{f}_0(\cdot,y)\|_X \right)(t)\right](x) \right).
    \end{align*}
    Hence,  $K^{glob}(\{P_u^{L_\alpha^\varphi}\}_{u>0})$ is a bounded operator from $L^2((0,T) \times (0,\infty), X)$ into itself.
\end{proof}

From Lemmas~\ref{Lem4.1} and \ref{Lem4.2} we deduce that the equivalence $(a) \Leftrightarrow (b)$ in
Theorem~\ref{Th1} for $L_\alpha^\varphi$ will be established once we prove the following.

\begin{Lem}\label{Lem4.3}
    Let $X$ be a Banach space and $T>0$. Then, the operator $\DD^{loc}$ given by
    $$\DD^{loc}=K_+^{loc}\left(\{P_u^{H}\}_{u>0}\right) - K^{loc}\left(\{P_u^{L_\alpha^\varphi}\}_{u>0}\right),$$
    is bounded in $L^2((0,T) \times (0,\infty),X)$.
\end{Lem}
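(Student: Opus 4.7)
The strategy is parallel to the Bessel case of Lemma~\ref{Lem2.3} and the Hermite case of Lemma~\ref{Lem3.2}: we compare the two Poisson kernels on the strip $x/2<y<2x$ by using the subordination formula to reduce to comparing the underlying heat kernels $W_u^H(x,y)$ and $W_u^{L_\alpha^\varphi}(x,y)$, and then controlling the difference by a combination of operators already known to be bounded on $L^2((0,T)\times(0,\infty),X)$ (the Poisson maximal operator $P_*$, the Hardy--Littlewood maximal operator $\M$, the local average $\mathbb{L}$ and the logarithmic average $\mathcal{A}$ introduced in Section~\ref{sec:Bessel}).

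The first step is to write, using subordination,
$$P_{t}^{H}(x,y)-P_{t}^{L_\alpha^\varphi}(x,y)
   =\frac{t}{\sqrt{4\pi}}\int_0^\infty \frac{e^{-t^2/4u}}{u^{3/2}}\bigl(W_u^H(x,y)-W_u^{L_\alpha^\varphi}(x,y)\bigr)\,du,$$
and to exploit the identity
$$W_u^{L_\alpha^\varphi}(x,y)=\sqrt{\frac{2e^{-u}}{1-e^{-2u}}}\;\sqrt{a}\,I_{\alpha-1/2}(a)\,e^{-b},
\qquad a=\frac{2xye^{-u}}{1-e^{-2u}},\quad b=\frac12\frac{1+e^{-2u}}{1-e^{-2u}}(x^2+y^2).$$
Since $W_u^H(x,y)=\sqrt{e^{-u}/[\pi(1-e^{-2u})]}\,e^{a-b}$, the leading term of \eqref{4.2} shows that the two expressions agree up to a factor $1+O(1/a)$, so that
$$|W_u^{L_\alpha^\varphi}(x,y)-W_u^H(x,y)|\leq C\,W_u^H(x,y)\cdot\min\!\left\{1,\,\frac{1-e^{-2u}}{xy\,e^{-u}}\right\}.$$
The minimum accounts for both the large-$a$ regime (use \eqref{4.2}) and the small-$a$ regime (use \eqref{4.1} together with the elementary bound $W_u^{L_\alpha^\varphi}\leq C W_u^H$ from the computation in Lemma~\ref{Lem4.2}).

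In the second step, on the local strip $x/2<y<2x$ we have $xy\asymp x^2$, so the bound becomes $|W_u^{L_\alpha^\varphi}-W_u^H|\leq C W_u^H\cdot\min\{1,(1-e^{-2u})/(x^2 e^{-u})\}$. Splitting the subordination integral into the regions $0<u<c\min\{1,x^{-2}\}$ and $u\geq c\min\{1,x^{-2}\}$, and combining with estimate \eqref{10.1} (which dominates $W_u^H(x,y)$ by a translate of the classical heat kernel), one obtains a pointwise bound on $|\partial_t(P_{t-s}^H-P_{t-s}^{L_\alpha^\varphi})(x,y)|$ of the form
$$C\left(\frac{1}{|t-s|^2+x^2}\Bigl[1+\log_+\tfrac{x}{|x-y|}\Bigr] \;+\; \frac{1}{|t-s|^2+|x-y|^2}\cdot\varepsilon(x,y)\right),$$
where $\varepsilon(x,y)$ is integrable and produces a small factor coming from the error term $R(a)$. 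By the same argument as in Lemma~\ref{Lem2.3}, the contributions of the right-hand side integrated over $x/2<y<2x$ are controlled respectively by $\mathcal{A}[P_*(\|\tilde f_0\|_X)](t,x)$ and $\mathbb{L}[P_*(\|\tilde f_0\|_X)](t,x)$; both operators are bounded on $L^2((0,T)\times(0,\infty))$. This proves the $L^2$-boundedness of $\DD^{loc}$.

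The main obstacle will be the careful patching together of the large-$a$ and small-$a$ regimes of $I_{\alpha-1/2}$ inside the subordination integral, in such a way that the resulting kernel estimate is summable and produces only a logarithmic singularity at the diagonal $y=x$; this is where most of the computational effort will lie, exactly analogous to the manipulations with $P_t^{S_\alpha,1,j}$ in the proof of Lemma~\ref{Lem2.3}. Once these pointwise kernel estimates are in place, the rest of the proof is a direct application of the $L^2$-boundedness of the operators $\M$, $\mathbb{L}$, $\mathcal{A}$ and $P_*$.
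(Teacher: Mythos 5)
Your plan follows the same strategy as the paper's proof: use the subordination formula to reduce the comparison of $P_t^H$ and $P_t^{L_\alpha^\varphi}$ on the strip $x/2<y<2x$ to a comparison of the heat kernels, control $|W_u^H(x,y)-W_u^{L_\alpha^\varphi}(x,y)|$ by $C\,W_u^H(x,y)\min\{1,1/\xi\}$ with $\xi=2xye^{-u}/(1-e^{-2u})$ via \eqref{4.2} for $\xi\geq 1$ and \eqref{4.1} for $\xi\leq 1$, and then conclude by composing a maximal operator in $t$ with a local averaging operator in $x$ that is bounded on $L^2(0,\infty)$. The difference is only in the bookkeeping of the final integral: the paper discards the time decay immediately, dominating the $s$-integration by the heat maximal operator $W_*$ and reducing to the purely spatial kernel $H(x,y)=\int_0^\infty|W_u^H-W_u^{L_\alpha^\varphi}|\,du/u\leq C/\sqrt{y|x-y|}$, which is then handled by the Jensen/Schur argument for the local operator $\mathcal{N}$; you instead keep the time variable and aim for a Poisson-type pointwise bound with a logarithmic diagonal singularity, handled by $P_*$ and $\mathcal{A}$ as in Lemma~\ref{Lem2.3}. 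Both routes work. Two details in your write-up should be corrected. First, the splitting point of the $u$-integral must be dictated by the condition $\xi\gtrless 1$, which on the local strip means roughly $u\lesssim xy\asymp x^2$ (for $u\leq 1$); your threshold $c\min\{1,x^{-2}\}$ is not the right quantity. Second, there is no genuine ``small factor $\varepsilon(x,y)$'' in the small-$\xi$ regime: there $|W_u^H-W_u^{L_\alpha^\varphi}|$ is comparable to $W_u^H$ itself, and what saves that regime is instead the constraint $u\gtrsim xy\asymp x^2$, which caps the singularity of the subordination integral and yields the bound $C/(|t-s|^2+x^2)$ directly (this is exactly how the paper extracts the factor $\xi^{1/4}/\sqrt{y}\leq 1/\sqrt{y}$ in \eqref{19.3}). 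With these two corrections your pointwise estimate
$$\bigl|\partial_t\bigl(P_{t-s}^H-P_{t-s}^{L_\alpha^\varphi}\bigr)(x,y)\bigr|\leq \frac{C}{|t-s|^2+x^2}\Bigl[1+\log_+\tfrac{x}{|x-y|}\Bigr],\qquad \tfrac{x}{2}<y<2x,$$
does hold and the argument closes as you describe.
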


\begin{proof}
    Let $f \in C_c^\infty(0,T) \otimes C_c^\infty(0,\infty) \otimes X$. We can write
    \begin{align*}
        \DD^{loc}(f)(t,x)
            = & \frac{1}{\sqrt{4\pi}} \int_0^t \int_0^\infty \frac{e^{-|t-s|^2/4u}}{u^{3/2}} \left(1 - \frac{|t-s|^2}{2u} \right) \\
            & \times \int_{x/2}^{2x} \left[ W_u^{H}(x,y) - W_u^{L_\alpha^\varphi}(x,y) \right] f(s,y) dyduds, \quad t,x \in (0,\infty).
    \end{align*}
    By defining the function $\tilde{f}$ as in \eqref{ext}, we get for every $t,x \in (0,\infty)$,
    \begin{equation}\label{4.*}
        \|\DD^{loc}(f)(t,x)\|_X
            \leq C \int_{x/2}^{2x} W_*(\|\tilde{f}(\cdot,y)\|_X)(t) \left( \int_0^\infty \left| W_u^{H}(x,y) - W_u^{L_\alpha^\varphi}(x,y) \right| \frac{du}{u} \right) dy.
    \end{equation}
    In order to estimate the function
    $$H(x,y)
        = \int_0^\infty \left| W_u^{H}(x,y) - W_u^{L_\alpha^\varphi}(x,y) \right| \frac{du}{u}, \quad x,y \in (0,\infty), \ x \neq y,$$
    we distingue two cases.
    To simplify the notation we call $\xi=\xi(u,x,y)=2xye^{-u}/(1-e^{-2u})$, $u,x,y \in (0,\infty)$.

    Suppose firstly that $\xi \geq 1$. By \eqref{10.1} and \eqref{4.2} we deduce
    \begin{align*}
        \left| W_u^{H}(x,y) - W_u^{L_\alpha^\varphi}(x,y) \right|
            = & W_u^{H}(x,y) \left| 1 - \sqrt{2\pi}e^{-\xi} \sqrt{\xi} I_{\alpha-1/2}(\xi) \right|
            \leq \frac{C}{\xi} W_u^{H}(x,y) \\
            \leq & C \left\{
                        \begin{array}{ll}
                            \dfrac{e^{-c|x-y|^2/u}}{\xi^{1/4}\sqrt{u}}, & 0 < u < 1,\\
                            &\\
                            \dfrac{e^{-u/2}e^{-c|x-y|^2}}{\xi^{1/4}}, & u \geq  1.
                        \end{array} \right.
    \end{align*}
    Then,
    \begin{align}\label{19.1}
        \int_{0, \ \xi \geq 1}^1 \left| W_u^{H}(x,y) - W_u^{L_\alpha^\varphi}(x,y) \right| \frac{du}{u}
            \leq & C \int_0^1 \dfrac{e^{-c|x-y|^2/u}}{\xi^{1/4}} \frac{du}{u^{3/2}}
            \leq \frac{C}{\sqrt{y}}\int_0^1 \dfrac{e^{-c|x-y|^2/u}}{u^{5/4}} du \nonumber \\
            \leq & \frac{C}{\sqrt{y|x-y|}}\int_{0}^\infty \dfrac{e^{-v}}{v^{3/4}} dv
            \leq  \frac{C}{y} \sqrt{\frac{y}{|x-y|}}, \quad \frac{x}{2}<y<2x, \ x \neq y.
    \end{align}
    Also, we obtain
    \begin{align}\label{19.2}
        \int_{1, \ \xi \geq 1}^\infty \left| W_u^{H}(x,y) - W_u^{L_\alpha^\varphi}(x,y) \right| \frac{du}{u}
            \leq & C \int_1^\infty \dfrac{e^{-u/2}e^{-c|x-y|^2}}{\xi^{1/4}} \frac{du}{u}
            \leq \frac{e^{-c|x-y|^2}}{\sqrt{y}}\int_1^\infty e^{-u/4} du \nonumber \\
            \leq &  \frac{C}{y} \sqrt{\frac{y}{|x-y|}}, \quad \frac{x}{2}<y<2x, \ x \neq y.
    \end{align}

    On the other hand, if $\xi \leq 1$, by \eqref{4.1} we have that
    \begin{align*}
        \left| W_u^{H}(x,y) - W_u^{L_\alpha^\varphi}(x,y) \right|
            \leq & C W_u^{H}(x,y) \left( 1+ e^{-\xi} \sqrt{\xi} I_{\alpha-1/2}(\xi)  \right)
            \leq C W_u^{H}(x,y) \left( 1 + \xi^\alpha \right) \\
            \leq & C \left\{
                        \begin{array}{ll}
                            \left( \dfrac{e^{-u}}{1-e^{-2u}} \right)^{1/4}\dfrac{e^{-c|x-y|^2/u}}{u^{1/4}}, & 0 < u < 1,\\
                            &\\
                            \left( \dfrac{e^{-u}}{1-e^{-2u}} \right)^{1/4} e^{-cu}e^{-c|x-y|^2}, & u \geq  1.
                        \end{array} \right.
    \end{align*}
    Then,
    \begin{align}\label{19.3}
        \int_{0, \ \xi \leq 1}^1 \left| W_u^{H}(x,y) - W_u^{L_\alpha^\varphi}(x,y) \right| \frac{du}{u}
            \leq & C \int_{0 , \ \xi \leq 1}^1 \dfrac{e^{-c|x-y|^2/u}}{u^{5/4}} \frac{\xi^{1/4}}{\sqrt{y}} du \nonumber \\
            \leq & \frac{C}{\sqrt{y}} \int_0^1 \dfrac{e^{-c|x-y|^2/u}}{u^{5/4}} du
            \leq  \frac{C}{y} \sqrt{\frac{y}{|x-y|}}, \quad \frac{x}{2}<y<2x, \ x \neq y,
    \end{align}
    and
    \begin{align}\label{20.1}
        \int_{1, \ \xi \leq 1}^\infty \left| W_u^{H}(x,y) - W_u^{L_\alpha^\varphi}(x,y) \right| \frac{du}{u}
            \leq & C \int_{1, \ \xi \leq 1}^\infty \frac{\xi^{1/4}}{\sqrt{y}} e^{-cu}  e^{-c|x-y|^2} du \nonumber \\
            \leq & C \frac{e^{-c|x-y|^2}}{\sqrt{y}} \int_1^\infty e^{-u/4} du
            \leq  \frac{C}{y} \sqrt{\frac{y}{|x-y|}}, \quad \frac{x}{2}<y<2x, \ x \neq y.
    \end{align}

    From \eqref{4.*}, \eqref{19.1}, \eqref{19.2}, \eqref{19.3} and \eqref{20.1} we deduce that
    \begin{equation}\label{4.6}
        \|\DD^{loc}(f)(t,x)\|_X
            \leq C \mathcal{N}\left[ W_*\left( \|\tilde{f}\|_X \right)(t) \right](x), \quad t,x \in (0,\infty),
    \end{equation}
    being
    $$\mathcal{N}(f)
        = \int_{x/2}^{2x} \frac{1}{y} \sqrt{\frac{y}{|x-y|}} f(y) dy, \quad f \in L^2(0,\infty).$$
    Jensen's inequality implies that the operator $\mathcal{N}$ is bounded from $L^2(0,\infty)$ into itself. Also,
    the maximal operator $W_*$ is bounded in $L^2(\R)$. Then, from \eqref{4.6} we infer that $\DD^{loc}$ is a bounded operator
    from $L^2((0,T) \times (0,\infty), X)$ into itself.
\end{proof}

\subsection{Laguerre operators $L_\alpha + (\alpha + 1/2)/2$, $L_\alpha^\ell$, $L_\alpha^\psi$ and $L_\alpha^\mathpzc{L}$}

Once again we use Lemma~\ref{LemTransfer} with the appropriate isometry and change of variables,
\begin{itemize}
    \item If $\overline{\mathcal{L}}=L_\alpha + (\alpha + 1/2)/2$, then $\mathcal{L}=L_\alpha^\varphi$, $M(x)=\sqrt{2} x^{\alpha}e^{-x^2/2}$, $h(x)=x^2$ and
    $$\Big(L_\alpha + \frac{\alpha + 1/2}{2}\Big) = (U \circ W)^{-1} \circ L_\alpha^\varphi \circ (U \circ W).$$
    \item If $\overline{\mathcal{L}}=L_\alpha^\ell$, then $\mathcal{L}=L_\alpha^\varphi$, $M(x)=\sqrt{2} x^{\alpha}$, $h(x)=x^2$ and
    $$L_\alpha^\ell  =  (U \circ W)^{-1} \circ L_\alpha^\varphi \circ (U \circ W).$$
    \item If $\overline{\mathcal{L}}=L_\alpha^\psi$, then $\mathcal{L}=L_\alpha^\varphi$, $M(x)=x^{\alpha}$, $h(x)=x$ and
    $$L_\alpha^\psi =  (U \circ W)^{-1} \circ L_\alpha^\varphi \circ (U \circ W).$$
    \item If $\overline{\mathcal{L}}=L_\alpha^{\mathpzc{L}}$, then $\mathcal{L}=L_\alpha^\varphi$, $M(x)=\sqrt{2} x^{1/2}$, $h(x)=x^2$ and
    $$L_\alpha^{\mathpzc{L}} = (U \circ W)^{-1} \circ L_\alpha^\varphi \circ (U \circ W).$$
\end{itemize}

\quad \\
\textbf{Acknowledgements}. We would like to offer thanks to the referee for his advice which have improved quite a lot this paper.



\end{document}